\newcommand{\Rm}{\mathbb{R}}
\newcommand{\Cm}{\mathbb{C}}
\newcommand{\Sm}{\mathbb{S}}
\newcommand{\ba}{\begin{eqnarray*}}
\newcommand{\ea}{\end{eqnarray*}}
\newcommand{\be}{\begin{equation}}
\newcommand{\ee}{\end{equation}}
\newcommand{\bea}{\begin{eqnarray}}
\newcommand{\eea}{\end{eqnarray}}
\newcommand{\va}{\varphi}
\newcommand{\pp}{\partial}
\newcommand{\vv}[1]{\boldsymbol{\mathrm{#1}}}
\newcommand{\hvv}[1]{\boldsymbol{\hat{\mathrm{#1}}}}
\newcommand{\uv}{\boldsymbol{{\hat{\mathrm{s}}}}}
\newcommand{\uvk}{\boldsymbol{\hat{\mathrm{k}}}}
\newcommand{\pint}{\:\mathcal{P}\!\!\int}
\newcommand{\rrf}[1]{\mathop{\mathcal{R}_{{#1}}}}
\newcommand{\irrf}[1]{\mathop{\mathcal{R}_{{#1}}^{-1}}}
\newtheorem{thm}{Theorem}[section]
\newtheorem{prop}[thm]{Proposition}
\newtheorem{defn}[thm]{Definition}
\theoremstyle{remark}\newtheorem{rmk}[thm]{Remark}
\newtheorem{exa}[thm]{Example}
\begin{document}

\title[
]{
An $F_N$ method for the radiative transport equation in three dimensions
}

\author{Manabu Machida}

\address{Department of Mathematics, University of Michigan, 
Ann Arbor, MI 48109, USA}
\ead{mmachida@umich.edu}
\begin{abstract}
The $F_N$ method is an accurate and efficient numerical method for the 
one-dimensional radiative transport equation.  In this paper the $F_N$ method 
is extended to three dimensions using rotated reference frames.  To 
demonstrate the method, the exiting flux from structured illumination 
reflected by a medium occupying the half space is calculated.
\end{abstract}

\pacs{05.60.Cd, 42.68.Ay, 95.30.Jx}
\maketitle

\section{Introduction}

We consider light propagating in a homogeneous random medium occupying 
the half-space $\Rm^3_+$ ($=\{\vv{r}\in\Rm^3;\,\vv{r}=(\vv{\rho},z),\,
\vv{\rho}\in\Rm^2,\,z>0\}$) with the boundary at $z=0$.  The specific 
intensity $I(\vv{r},\uv)$ ($\vv{r}\in\Rm^3_+$, $\uv\in\Sm^2$) of light obeys 
the following radiative transport equation.
\be
\fl
\cases{
\uv\cdot\nabla I(\vv{r},\uv)+I(\vv{r},\uv)
=\varpi\int_{\Sm^2}p(\uv,\uv')I(\vv{r},\uv')\,d\uv'+S(\vv{r},\uv),
&$z>0$,
\\
I(\vv{r},\uv)=f(\vv{\rho},\uv),
&$z=0$, $\mu\in(0,1]$,
\\
I(\vv{r},\uv)\to0,
&$z\to\infty$,
}
\label{rte}
\ee
where $f(\vv{\rho},\uv)$ is the incident beam and $S(\vv{r},\uv)$ is the 
internal source.  Let $\mu$ and 
$\va$ be the cosine of the polar angle and the azimuthal angle 
of $\uv\in\Sm^2$.  Here $\varpi\in(0,1)$ is the 
albedo for single scattering.  Using the absorption and scattering 
parameters $\mu_a$ and $\mu_s$, we have $\varpi=\mu_s/\mu_t$, where 
$\mu_t=\mu_a+\mu_s$ is the total attenuation.  The above form (\ref{rte}) 
implies that $\vv{r}$ is normalized by $\mu_t$.  Furthermore 
$p(\uv,\uv')$ is the scattering phase function which is normalized as
\[
\int_{\Sm^2}p(\uv',\uv)\,d\uv'=1,\quad\uv\in\Sm^2.
\]
The radiative transport equation or the linear Boltzmann equation governs 
transport processes of noninteracting particles such as neutrons in a reactor 
as well as light propagation in random media such as fog, clouds, and 
biological tissue.

In this paper we will present a numerical method of solving (\ref{rte}) 
by extending the $F_N$ method ($F$ stands for facile) to three dimensions.  
The $F_N$ method first developed by Siewert \cite{Siewert78} is a method of 
obtaining the specific intensity in one dimension making use of orthogonality 
relations of singular eigenfunctions \cite{Case60,Case-Zweifel,
Duderstadt-Martin}.  The use of 
rotated reference frames \cite{Markel04,Panasyuk06,Schotland-Markel07} makes 
it possible to extend the $F_N$ method to three dimensions.

In 1960 Case considered the time-independent one-dimensional radiative 
transport 
equation with isotropic scattering and solved the equation with separation 
of variables by finding singular eigenfunctions \cite{Case60}.  The method 
was soon extended to the case of anisotropic scattering without 
\cite{McCormick-Kuscer65,Mika61} and with 
\cite{McCormick-Kuscer66} azimuthal dependence. Such singular-eigenfunction 
approach is sometimes called Caseology. In this method, solutions to 
the one-dimensional radiative transport equation are given by a superposition 
of singular eigenfunctions.  The existence and uniqueness of such solutions 
were proved \cite{Larsen73,Larsen74,Larsen75,LSZ75}.  In the $F_N$ method, 
there is no need of evaluating singular functions although the fact that 
the specific intensity consists of singular eigenfunctions is used.  
In one dimension, the radiative transport equation 
was solved by the $F_N$ method in the slab geometry for isotropic scattering 
\cite{Grandjean-Siewert79,Siewert-Benoist79} and anisotropic scattering 
without \cite{Devaux-Siewert80,Garcia-Siewert85,Siewert78} and with 
\cite{Garcia-Siewert92,Garcia-Siewert98} azimuthal dependence.  
The method was also extended to multigroup \cite{Garcia-Siewert81}.  
After finding the specific intensity on the boundary, we can further calculate 
the specific intensity inside the medium \cite{Garcia-Siewert85}.  
The uniqueness of the solution to the key $F_N$ equation was proved 
\cite{Larsen82}.  
For isotropic scattering, the three dimensional radiative transport equation 
was solved with the $F_N$ method \cite{Dunn-Siewert85,Siewert-Dunn83} 
using the pseudo-problem \cite{Williams82}, which is based on plane-wave 
decomposition. See the review article by Garcia \cite{Garcia85}.

In 1964 Dede used rotated reference frames to solve the three-dimensional 
radiative transport equation with the $P_N$ method \cite{Dede64}.  Dede 
pointed out that equations in three dimensions reduce to one-dimensional 
equations if reference frames are rotated in the direction of the Fourier 
vector.  Kobayashi developed Dede's calculation and computed coefficients 
in the $P_N$ expansion by solving a three-term recurrence relation 
recursively starting with the initial term \cite{Kobayashi77}.  In 2004 
Markel obtained the coefficients in terms of eigenvalues and eigenvectors of 
the tridiagonal matrix originating from the three-term recurrence relation, 
and showed that the specific intensity can be efficiently computed 
\cite{Markel04}.  With the use of eigenvalues, the relation to Case's method 
became visible.  This new formulation can be viewed as separation 
of variables in which the eigenvalues are separation constants 
\cite{Schotland-Markel07}.  Moreover it was found that any complex unit 
vector can be used to rotate reference frames \cite{Panasyuk06}.  This 
generalization makes it possible to solve boundary value problems in the form 
of plane-wave decomposition \cite{Machida10}.  It was then found that the 
structure of separation of variables implies Case's method in rotated 
reference frames \cite{Machida14}.  Thus the singular-eigenfunction approach 
was extended to three dimensions.  Indeed the method of rotated 
reference frames is a three-dimensional extension of the 
spherical-harmonic expansion 
\cite{Barichello-Garcia-Siewert98,Sanchez-McCormick82} in Caseology. 

The usefulness of the method of rotated reference frames has been numerically 
justified for a two-dimensional rectangular domain \cite{Kobayashi77}, 
a three-dimensional infinite medium \cite{Markel04,Panasyuk06}, 
the slab geometry in three dimensions \cite{Machida10}, 
in flatland \cite{LK11,LK12a,LK13c}, 
in the half-space geometry \cite{LK12c,LK12e,LK13a,LK13b,LK14}, and 
the time-dependent equation in an infinite medium \cite{LK12b,LK12d}.  
The method was also used to experimentally determine optical properties of 
turbid media \cite{Xu-Patterson06a,Xu-Patterson06b}. It is expected that 
more accurate numerical values are obtained if higher terms in the series 
are taken into account. 
Although the method of rotated reference frames is an efficient method, 
the obtained values become unstable when high-degree spherical 
harmonics are used.  The three-dimensional $F_N$ method 
developed in the present paper does not suffer from this instability.

By assuming that scatterers are spherically symmetric, we model $p(\uv,\uv')$ 
as
\be
p(\uv,\uv')
=\frac{1}{4\pi}\sum_{l=0}^L\beta_lP_l(\uv\cdot\uv')
=\sum_{l=0}^L\sum_{m=-l}^l\frac{\beta_l}{2l+1}Y_{lm}(\uv)Y_{lm}^*(\uv'),
\label{phasefunc}
\ee
where $L\ge1$, and $\beta_0=1$, $0<\beta_l<2l+1$ for $l\ge1$.  
Moreover $P_l$ are Legendre polynomials and $Y_{lm}$ are spherical harmonics.  
We introduce the scattering asymmetry parameter $\mathrm{g}$ as 
$\beta_l=(2l+1)\mathrm{g}^l$ ($0<\mathrm{g}<1$).  The Henyey-Greenstein model 
\cite{Henyey-Greenstein41} is obtained in the limit $L\to\infty$.  

Let us define
\[
\tilde{I}(\vv{q},z,\uv)
=\int_{\Rm^2}e^{i\vv{q}\cdot\vv{\rho}}I(\vv{r},\uv)\,d\vv{\rho},\quad
\vv{q}\in\Rm^2.
\]
We similarly define $\tilde{f}(\vv{q},\uv)$ and $\tilde{S}(\vv{q},z,\uv)$.  
Let us express the upper and lower hemispheres as 
$\Sm^2_{\pm}=\{\uv\in\Sm^2;\,\pm\mu>0\}$. 
We expand the Fourier transform of the reflected light  
$\tilde{I}(\vv{q},0,-\uv)$ ($\uv\in\Sm^2_+$) as
\be
\tilde{I}(\vv{q},0,-\uv)
\approx
\sum_{m=-l_{\rm max}}^{l_{\rm max}}
\sum_{\alpha=0}^{\left\lfloor(l_{\rm max}-|m|)/2\right\rfloor}
c_{|m|+2\alpha,m}(\vv{q})Y_{|m|+2\alpha,m}(\uv),
\label{FNexpansion}
\ee
where $l_{\rm max}$ is the highest degree of the expansion 
($l_{\rm max}\ge L$). Only same-parity degrees 
are taken because the three-term recurrence relation of associated Legendre 
polynomials implies that $Y_{lm}$ of opposite-parity $l$ are not independent 
\cite{Garcia-Siewert98,Panasyuk06}. This expansion in (\ref{FNexpansion}) can 
be compared to the $P_N$ method \cite{Case-Zweifel}, but the $F_N$ method is 
more efficient because the spatial dependence of the specific intensity is 
analytically given and the orthogonality relation among three-dimensional 
singular eigenfunctions can be used (see \S\ref{pre:case3d}). 
On the other hand, $\tilde{I}(\vv{q},0,-\uv)$ is given as a linear combination 
of eigenmodes $\rrf{\uvk(\nu,\vv{q})}\Phi_{\nu}^{m'}(\uv)$ \cite{Machida14}, 
for which notations are introduced in \S\ref{pre:case3d}. They satisfy 
orthogonality relations. For simplicity let us assume $S(\vv{r},\uv)=0$. 
Making use of the fact that $\tilde{I}$ contains only decaying modes, we have 
(See (\ref{projection1}) for the general case)
\be
\int_{\Sm^2}\mu\left(\rrf{\uvk(-\xi,\vv{q})}\Phi_{-\xi}^{m'*}(\uv)
\right)\tilde{I}(\vv{q},0,\uv)\,d\uv=0,\qquad\xi>0.
\label{projection0}
\ee
The above equation results in a linear system for $c_{|m|+2\alpha,m}(\vv{q})$. 
The specific intensity of the reflected light is then calculated as
\[
I(\vv{\rho},0,-\uv)
\approx\frac{1}{(2\pi)^2}\int_{\Rm^2}e^{-i\vv{q}\cdot\vv{\rho}}
\sum_{m=-l_{\rm max}}^{l_{\rm max}}\sum_{l=|m|,|m|+2,\dots}
c_{lm}(\vv{q})Y_{lm}(\uv)\,d\vv{q},
\]
where $\mu\in(0,1]$.

\begin{rmk}
Isotropic scattering $\mathrm{g}=0$ is possible.  However we need to change 
the collocation scheme for obtaining $c_{lm}$.  For the sake of simplicity, 
we assume $\mathrm{g}>0$ in this paper.
\end{rmk}

\begin{rmk}
The expansion in (\ref{FNexpansion}) can be compared to the method of 
rotated reference frames, which expands every eigenmode with 
spherical harmonics:
\be
\rrf{\uvk(\nu,\vv{q})}\Phi_{\nu}^{m'}(\uv)
\approx\sum_{l=0}^{l_{\rm max}}\sum_{m=-l}^lc_{lm}^{m'}(\nu)
\rrf{\uvk(\nu,\vv{q})}Y_{lm}(\uv),
\label{mrrfexpansion}
\ee
with some coefficients $c_{lm}^{m'}(\nu)$. This causes 
numerical instability regardless of $f(\vv{\rho},\uv)$ and $S(\vv{r},\uv)$ 
when $l_{\rm max}$ is increased to achieve higher precision. 
For example, let us consider a simple case of $L=0$, $m'=0$, 
$\cos(\va-\va_{\vv{q}})=0$, and $\nu\neq\mu\hat{k}_z(\nu q)$. Noting that 
$\rrf{\uvk(\nu,\vv{q})}Y_{lm}(\uv)=\frac{2l+1}{4\pi}
P_l^m\left(\mu\hat{k}_z(\nu q)\right)\rrf{\uvk(\nu,\vv{q})}e^{im\va}$, we see 
that the right-hand side of (\ref{mrrfexpansion}) is a polynomial of 
$\mu\hat{k}_z(\nu q)$. On the left-hand side, we have 
$\rrf{\uvk(\nu,\vv{q})}\Phi_{\nu}^{m'}(\uv)=\frac{\varpi\nu}{2}
\left[\nu-\mu\hat{k}_z(\nu q)\right]^{-1}=
\frac{\varpi}{2}\left[1+(1/\nu)\mu\hat{k}_z(\nu q)
+(1/\nu)^2\left(\mu\hat{k}_z(\nu q)\right)^2+\cdots\right]$. This series is 
divergent if $\nu-\mu\hat{k}_z(\nu q)<0$. In general, the instability takes 
place due to the same mechanism. Figure \ref{fig0} shows 
the exiting current on the boundary ($z=0$) as a function of $l_{\rm max}$. 
See \S\ref{numerics} for the details.

\begin{figure}[ht]
\begin{center}
\includegraphics[width=0.5\textwidth]{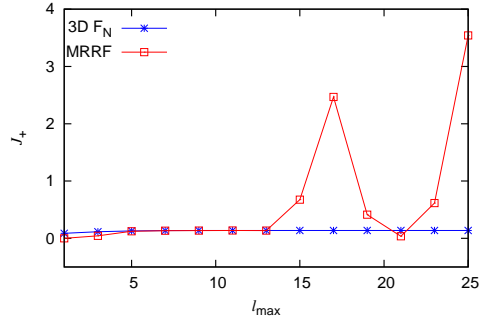}
\end{center}
\caption{
The exitance (\ref{hemisphericJabs}) is plotted as a function of 
$l_{\rm max}$ for $\mu_a=0.05$, $\mu_s=100$, and $\mathrm{g}=0.01$. 
We set $L=l_{\rm max}$.
}
\label{fig0}
\end{figure}

\end{rmk}

The remainder of the paper is organized as follows.  In \S\ref{pre} we 
introduce singular eigenfunctions and rotated reference frames.  In \S\ref{fn} 
we consider the $F_N$ method in three dimensions.  The key $F_N$ equation is 
obtained in (\ref{keyFN}), from which the coefficients $c_{lm}$ in 
(\ref{FNexpansion}) are computed.  
In \S\ref{numerics} the three-dimensional $F_N$ method is 
numerically tested for structured illumination.  Section \ref{concl} is 
devoted to concluding remarks.  Finally structured illumination by 
the method of rotated reference frames is summarized in 
\ref{mrrf}.

\section{Preliminaries}
\label{pre}

To develop the $F_N$ method in three dimensions in \S\ref{fn}, 
we give brief reviews and define our notations in this section. 
In \S\ref{pre:poly}, we introduce polynomials $g_l^m$ and $p_l^m$. 
In \S\ref{case1d}, Case's singular-eigenfunction approach is explained. 
In \S\ref{pre:case3d}, we give a review on singular eigenfunctions in 
three dimensions. 
In \S\ref{pre:mrrf}, it is sketched how the method of rotated reference frames 
is obtained using three-dimensional singular eigenfunctions.

\subsection{Polynomials}
\label{pre:poly}

\begin{defn}
We introduce $h_l$ ($l=0,1,\dots$) as
\[
h_l=2l+1-\varpi\beta_l\chi_{[0,L]}(l),
\]
with $\chi_{[0,L]}(l)$ the step function ($\chi=1$ for $0\le l\le L$ and 
$\chi=0$ otherwise).  
\end{defn}

\begin{defn}[Refs.~\cite{Garcia-Siewert89,Garcia-Siewert90}]
The normalized Chandrasekhar polynomials $g_l^m(\xi)$ ($m\ge0$, $l\ge m$, 
$\nu\in\Rm$) are given by the three-term recurrence relation 
\be
\nu h_lg_l^m(\nu)
=\sqrt{(l+1)^2-m^2}g_{l+1}^m(\nu)+\sqrt{l^2-m^2}g_{l-1}^m(\nu),
\label{chandra1}
\ee
with the initial term
\be
g_m^m(\nu)=\frac{(2m-1)!!}{\sqrt{(2m)!}}=\frac{\sqrt{(2m)!}}{2^{m}m!}.
\label{chandra1init}
\ee
\end{defn}

We note that
\[
g_l^{-m}(\nu)=(-1)^mg_l^m(\nu),\qquad g_l^m(-\nu)=(-1)^{l+m}g_l^m(\nu).
\]
The polynomials $g_l^m$ are obtained if we multiply Chandrasekhar polynomials 
\cite{Chandrasekhar} by $\sqrt{(l-m)!/(l+m)!}$ \cite{Siewert-McCormick97}.

\begin{defn}
The polynomials $p_l^m(\mu)$ ($m\ge0$, $l\ge m$) are introduced as
\be
\fl
p_l^m(\mu)
=(-1)^m\sqrt{\frac{(l-m)!}{(l+m)!}}P_l^m(\mu)(1-\mu^2)^{-m/2}
=\sqrt{\frac{(l-m)!}{(l+m)!}}\frac{d^m}{d\mu^m}P_l(\mu),
\label{deflowercasep}
\ee
where $P_l(\mu)$ is the Legendre polynomial of degree $l$ and $P_l^m(\mu)$ is 
the associated Legendre polynomial of degree $l$ and order $m$.
\end{defn}

We have
\[
p_l^{-m}(\mu)=(-1)^mp_l^m(\mu).
\]
The polynomials satisfy the three-term recurrence relation
\be
\fl
\sqrt{l^2-m^2}p_{l-1}^m(\mu)-(2l+1)\mu p_l^m(\mu)
+\sqrt{(l+1)^2-m^2}p_{l+1}^m(\mu)=0,
\label{precurr}
\ee
with
\[
p_{|m|}^{|m|}(\mu)=\frac{(2|m|-1)!!}{\sqrt{(2|m|)!}}
=\frac{\sqrt{(2|m|)!}}{2^{|m|}|m|!},
\]
and the orthogonality relation
\[
\int_{-1}^1p_l^m(\mu)p_{l'}^m(\mu)\left(1-\mu^2\right)^{|m|}\,d\mu
=\frac{2}{2l+1}\delta_{ll'}.
\]

\subsection{Singular eigenfunctions for one dimension}
\label{case1d}

We will first investigate the one-dimensional homogeneous radiative 
transport equation (\ref{rte1d}) and then consider the three dimensional 
equation (\ref{rte3d}).  Let us begin with
\be
\mu\frac{\pp}{\pp z}I(z,\uv)+I(z,\uv)
=\varpi\int_{\Sm^2}p(\uv,\uv')I(z,\uv')\,d\uv',
\label{rte1d}
\ee
where $z\in\Rm$, $\uv\in\Sm^2$ and $p(\uv,\uv')$ is given 
in (\ref{phasefunc}).  Separated solutions to (\ref{rte1d}) are given by 
\cite{Case60,McCormick-Kuscer66,Mika61}
\be
I(z,\uv)=\Phi_{\nu}^m(\uv)e^{-z/\nu},
\label{separated1d}
\ee
where $\nu\in\Rm$ is a separation constant, $m$ ($|m|\le L$) 
is an integer, and
\be
\Phi_{\nu}^m(\uv)=\phi^m(\nu,\mu)\left(1-\mu^2\right)^{|m|/2}e^{im\va}.
\label{Phidef}
\ee
Here $\phi^m(\nu,\mu)$ satisfies
\[
\int_{-1}^1\phi^m(\nu,\mu)\left(1-\mu^2\right)^{|m|}\,d\mu=1.
\]
By plugging (\ref{separated1d}) into (\ref{rte1d}) we obtain
\be
\left(1-\frac{\mu}{\nu}\right)\Phi_{\nu}^m(\uv)
=\varpi\int_{\Sm^2}p(\uv,\uv')\Phi_{\nu}^m(\uv')\,d\uv'.
\label{Phieq1}
\ee
We multiply (\ref{Phieq1}) by $Y_{l'm'}^*(\uv)$ and integrate both sides 
over $\Sm^2$. By noticing the expression of $p(\uv,\uv')$ in (\ref{phasefunc}) 
and rearranging terms, we obtain
\be
\fl
\nu\left(1-\frac{\varpi\beta_{l'}}{2l'+1}\chi_{[0,L]}(l')\right)
\int_{\Sm^2}Y_{l'm'}^*(\uv)\Phi_{\nu}^m(\uv)\,d\uv
=\int_{\Sm^2}\mu Y_{l'm'}^*(\uv)\Phi_{\nu}^m(\uv)\,d\uv.
\label{chandragint1}
\ee
Using the recurrence relation 
$(2l+1)\mu P_l^m(\mu)=(l+1-m)P_{l+1}^m(\mu)+(l+m)P_{l-1}^m(\mu)$, we see that 
(\ref{chandragint1}) becomes the three-term recurrence relation 
(\ref{chandra1}) for $m'=m$.  That is, we obtain
\be
g_l^m(\nu)
=(-1)^m\sqrt{\frac{(l-m)!}{(l+m)!}}
\int_{-1}^1\phi^m(\nu,\mu)(1-\mu^2)^{|m|/2}P_l^m(\mu)\,d\mu.
\label{chandragint2}
\ee
Noting that $P_m^m(\mu)=(-1)^m(2m-1)!!(1-\mu^2)^{m/2}$ ($m\ge0$), we see that 
(\ref{chandragint2}) satisfies (\ref{chandra1init}).  

Let us rewrite (\ref{Phieq1}) as
\[
\left(1-\frac{\mu}{\nu}\right)\phi^m(\nu,\mu)
=\frac{\varpi}{2}\sum_{l'=|m|}^L\beta_{l'}p_{l'}^m(\mu)g_{l'}^m(\nu).
\]
We define $g^m$ as
\be
g^m(\nu,\mu)=\sum_{l=|m|}^L\beta_lp_l^m(\mu)g_l^m(\nu).
\label{gfuncbpg}
\ee
Singular eigenfunctions $\phi^m(\nu,\mu)$ are thus obtained as
\[
\phi^m(\nu,\mu)=\frac{\varpi\nu}{2}\mathcal{P}\frac{g^m(\nu,\mu)}{\nu-\mu}
+\lambda^m(\nu)\left(1-\mu^2\right)^{-|m|}\delta(\nu-\mu),
\]
where $\mathcal{P}$ denotes the Cauchy principal value.  Here the separation 
constant $\nu$ has discrete values $\pm\nu_j^m$ 
($\nu_j^m>1$, $j=0,1,\dots,M^m-1$) and the continuous spectrum between 
$-1$ and $1$.  The number $M^m$ of discrete eigenvalues depends on $\varpi$ 
and $\beta_l$.  The function $\lambda^m(\nu)$ is given by
\[
\lambda^m(\nu)=1-\frac{\varpi\nu}{2}\pint_{-1}^1\frac{g^m(\nu,\mu)}{\nu-\mu}
(1-\mu^2)^{|m|}\,d\mu.
\]
Discrete eigenvalues satisfy
\[
\Lambda^m(\nu_j^m)=0,
\]
where for $w\in\Cm$
\[
\Lambda^m(w)=1-\frac{\varpi w}{2}\int_{-1}^1\frac{g^m(w,\mu)}{w-\mu}
(1-\mu^2)^{|m|}\,d\mu.
\]
By using $P_l^{-m}=P_l^m(-1)^m(l-m)!/(l+m)!$, we can readily check that 
$g_l^m(\nu)$ in (\ref{chandragint2}) satisfy 
$g_l^{-m}(\nu)=(-1)^mg_l^m(\nu)$.  This implies $\phi^{-m}=\phi^m$.  Singular 
eigenfunctions $\phi^m(\nu,\mu)$ satisfy 
\cite{Case60,McCormick-Kuscer66,Mika61}
\[
\int_{-1}^1\mu\phi^m(\nu,\mu)\phi^m(\nu',\mu)\,d\mu
=\mathcal{N}^m(\nu)\delta_{\nu\nu'},
\]
where the Kronecker delta $\delta_{\nu\nu'}$ is replaced by the Dirac delta 
$\delta(\nu-\nu')$ if $\nu,\nu'$ are in the continuous spectrum. The 
normalization factor $\mathcal{N}^m(\nu)$ is given by
\be
\mathcal{N}^m(\nu)=\cases{
\frac{1}{2}(\nu_j^m)^2g(\nu_j^m,\nu_j^m)\left.\frac{d\Lambda^m(w)}{dw}
\right|_{w=\nu_j^m},&$\nu=\nu_j^m$,
\\
\nu\Lambda^{m+}(\nu)\Lambda^{m-}(\nu)(1-\nu^2)^{-|m|},&$\nu\in(-1,1)$,
}
\label{normalizationN}
\ee
where $\Lambda^{m\pm}(\nu)=\lim_{\epsilon\to0^+}\Lambda^m(\nu\pm i\epsilon)$.

We can numerically obtain the discrete eigenvalues $\nu_j^m$ 
as eigenvalues of a tridiagonal matrix $B(m)$ below.  For $l_B$ 
($\ge L$) and  $m$ ($-L\le m\le L$), the matrix $B(m)$ is given by
\be
B(m)=\left(\begin{array}{ccccc}
0&b_{|m|+1}&0&&\\
b_{|m|+1}&0&b_{|m|+2}&&\\
0&b_{|m|+2}&0&\ddots&\\
&&\ddots&\ddots&b_{l_B}\\
&&&b_{l_B}&0
\end{array}\right),
\label{Bmat}
\ee
where $b_l(m)=\sqrt{(l^2-m^2)/(h_lh_{l-1})}$.  The matrix $B(m)$ has 
$(l_B-|m|+1)/2$ or $(l_B-|m|)/2$ positive eigenvalues for $l_B-|m|+1$ even 
or odd, respectively.  To see how $B(m)$ is obtained, we first prove 
the following proposition.

\begin{prop}[Ref.~\cite{Garcia-Siewert82}]
\label{reLambda}
Discrete eigenvalues are zeros of $g_l^m$ as $l\to\infty$.
\end{prop}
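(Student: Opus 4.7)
Proof proposal. The plan is to split the recurrence (\ref{chandra1}) into a regime $l>L$ where it no longer depends on $L$ or the $\beta_l$, and then extract the dominant exponential mode. Because $h_l=2l+1$ for $l>L$, the recurrence (\ref{chandra1}) becomes identical to the one satisfied by the normalised associated Legendre polynomials $\hat P_l^m(\nu):=\sqrt{(l-m)!/(l+m)!}\,P_l^m(\nu)$ and the normalised second-kind functions $\hat Q_l^m(\nu):=\sqrt{(l-m)!/(l+m)!}\,Q_l^m(\nu)$. Consequently, for every $\nu$ and every $l\ge L$,
\[
g_l^m(\nu) = a^m(\nu)\,\hat P_l^m(\nu) + b^m(\nu)\,\hat Q_l^m(\nu),
\]
with $a^m(\nu),\,b^m(\nu)$ determined by matching at $l=L,\,L+1$ to the polynomials generated recursively from (\ref{chandra1init}).

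For $|\nu|>1$ the two basis functions have opposite exponential behaviour: with $r_+(\nu):=\nu+\sqrt{\nu^2-1}$ satisfying $|r_+|>1$, standard asymptotics give $\hat P_l^m(\nu)\sim c_+\,r_+^{\,l}/l^{m+1/2}$ and $\hat Q_l^m(\nu)\sim c_-\,r_+^{\,-l}/l^{m+1/2}$. Hence $g_l^m(\nu)\to 0$ as $l\to\infty$ if and only if the amplitude $a^m(\nu)$ of the dominant mode vanishes, and in that case the zeros of $g_l^m$ outside $[-1,1]$ accumulate at $\nu$.

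The key step is then to identify $a^m(\nu)$ with $\Lambda^m(\nu)$ up to a non-vanishing factor. For this I would substitute the principal-value form of $\phi^m$ (there is no delta contribution when $|\nu|>1$) into the moment representation (\ref{chandragint2}), expand $g^m(\nu,\mu)(1-\mu^2)^{|m|/2}$ via (\ref{gfuncbpg}) and (\ref{deflowercasep}), and then apply Neumann's integral representation $\hat Q_l^m(\nu)=\tfrac{1}{2}\int_{-1}^1\hat P_l^m(\mu)/(\nu-\mu)\,d\mu$ to rewrite the result as a finite linear combination of $\hat Q_{l'}^m(\nu)$'s with coefficients proportional to $\varpi\nu\beta_{l'}g_{l'}^m(\nu)/2$. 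Comparing this representation with the $\hat P_l^m,\hat Q_l^m$ decomposition above at $l=L$ and $l=L+1$, and recognising the emergent sum as the defining integral of $\Lambda^m(\nu)$, should yield $a^m(\nu)=C(\nu)\Lambda^m(\nu)$ with $C(\nu)\ne 0$ on $|\nu|>1$. Since $\Lambda^m(\nu_j^m)=0$ by definition, this gives $g_l^m(\nu_j^m)\to 0$.

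The main obstacle is precisely this identification: keeping track of the $\sqrt{(l-m)!/(l+m)!}$ normalisations, the finite $L$-truncation in (\ref{gfuncbpg}), and the constants coming from the Wronskian of $\hat P_l^m$ and $\hat Q_l^m$ at $l=L,L+1$. An alternative route I would keep in reserve is Markov's theorem for the Jacobi matrix $B(m)$ in (\ref{Bmat}): its off-diagonal entries $b_l(m)$ stabilise for $l>L$, the spectral measure of the infinite matrix carries atoms at $1/\nu_j^m$, and the eigenvalues of the $l_B\to\infty$ truncation---namely the reciprocals of the zeros of $g_l^m$---therefore cluster at those atoms. Both routes rest on the same analytic input; the Jacobi-matrix formulation trades explicit Wronskian manipulations for a standard spectral lemma.
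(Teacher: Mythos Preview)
Your plan is correct and rests on the same structural observation as the paper: once $l>L$ the recurrence (\ref{chandra1}) coincides with (\ref{precurr}), so $g_l^m(\nu)=a^m(\nu)\,p_l^m(\nu)+b^m(\nu)\,q_l^m(\nu)$ with a growing first-kind and a decaying second-kind solution, and the whole question reduces to showing $a^m(\nu)=\Lambda^m(\nu)$. Where you and the paper diverge is precisely at that identification step, which you flag as the main obstacle.

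The paper bypasses the integral-representation bookkeeping entirely. Instead of matching (\ref{chandragint2}) against Neumann's formula, it defines the second-kind function directly as the weighted Stieltjes transform $q_l^m(w)=\tfrac12\int_{-1}^1 p_l^m(\mu)(1-\mu^2)^{|m|}(w-\mu)^{-1}\,d\mu$, derives its recurrence (inhomogeneous only at $l=|m|$), and then runs three Christoffel--Darboux telescopes: pair $(q,g)$, pair $(p,q)$, and pair $(p,g)$. Summing each from $l=|m|$ to $l_B$ and combining the three resulting Wronskian identities yields the \emph{exact} finite-$l_B$ formula
\[
p_{l_B+1}^m(\nu)\,\Lambda^m(\nu)\;=\;g_{l_B+1}^m(\nu)\;-\;\varpi\nu\,g^m(\nu,\nu)\,q_{l_B+1}^m(\nu),
\]
valid for every $l_B\ge L$. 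Dividing by $p_{l_B+1}^m(\nu)$ and using $q_l^m/p_l^m\to 0$ off $[-1,1]$ gives $a^m(\nu)=\Lambda^m(\nu)$ on the nose, not merely up to a non-vanishing factor. This algebraic route is shorter and completely sidesteps the normalisation and Wronskian-constant tracking you anticipated; it is worth knowing as the standard device for this kind of identification. Your integral-representation route would also close, but with more effort, and your Jacobi-matrix backup (Markov's theorem for $B(m)$) is a legitimate third path that the paper does not pursue.
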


\begin{proof}
We define
\[
q_l^m(w)=\frac{1}{2}\int_{-1}^1\frac{p_l^m(\mu)}{w-\mu}(1-\mu^2)^{|m|}\,d\mu,
\quad w\notin[-1,1].
\]
For $\nu\notin[-1,1]$, the three-term recurrence relation of $p_l^m$ implies
\bea
\sqrt{(l+1)^2-m^2}q_{l+1}^m(\nu)
&=&
(2l+1)\nu q_l^m(\nu)-\sqrt{l^2-m^2}q_{l-1}^m(\nu)
\nonumber \\
&-&
\left(\mathop{\rm sgn}(m)\right)^m\frac{\sqrt{(2|m|)!}}{(2|m|-1)!!}
\delta_{l,|m|}.
\label{prop:qrecurr}
\eea
By subtracting (\ref{chandra1}) multiplied by $q_l^m(\nu)$ on both sides 
from (\ref{prop:qrecurr}) multiplied by $g_l^m(\nu)$ on both sides, we obtain
\ba
\fl
\sqrt{(l+1)^2-m^2}
\left(q_{l+1}^m(\nu)g_l^m(\nu)-q_l^m(\nu)g_{l+1}^m(\nu)\right)
=(2l+1)\nu q_l^m(\nu)g_l^m(\nu)-\nu h_lq_l^m(\nu)g_l^m(\nu)
\\
-\sqrt{l^2-m^2}\left(q_{l-1}^m(\nu)g_l^m(\nu)-q_l^m(\nu)g_{l-1}^m(\nu)\right)
-\delta_{l,|m|}.
\ea
Suppose $l_B\ge L$.  By taking the summation $\sum_{l=|m|}^{l_B}$ we obtain
\ba
\fl
\sqrt{(l_B+1)^2-m^2}\left[q_{l_B+1}^m(\nu)g_{l_B}^m(\nu)
-q_{l_B}^m(\nu)g_{l_B+1}^m(\nu)\right]
\\
=\sum_{l=|m|}^{l_B}\left((2l+1)\nu-\nu h_l\right)q_l^m(\nu)g_l^m(\nu)-1.
\ea
Noting that $\Lambda^m(\nu)=1-\varpi\nu\sum_{l=|m|}^L
\beta_lg_l^m(\nu)q_l^m(\nu)$, we obtain (the Christoffel-Darboux formula)
\be
\Lambda^m(\nu)=\sqrt{(l_B+1)^2-m^2}\left[q_{l_B}^m(\nu)g_{l_B+1}^m(\nu)
-q_{l_B+1}^m(\nu)g_{l_B}^m(\nu)\right].
\label{prop:A}
\ee
Next we subtract (\ref{prop:qrecurr}) multiplied by $p_l^m(\nu)$ on both sides 
from (\ref{precurr}) multiplied by $q_l^m(\nu)$ on both sides. By summing the 
resulting expression over $l$ from $|m|$ to $l_B$, we have
\be
1=\sqrt{(l_B+1)^2-m^2}\left(p_{l_B+1}^m(\mu)q_{l_B}^m(\nu)
-p_{l_B}^m(\nu)q_{l_B+1}^m(\nu)\right).
\label{prop:B}
\ee
Similarly we subtract (\ref{chandra1}) multiplied by $p_l^m(\nu)$ on both 
sides from (\ref{precurr}) multiplied by $g_l^m(\nu)$ on both sides, and 
take the sum over $l$ from $|m|$ to $l_B$. We obtain
\be
\varpi\nu g^m(\nu,\nu)
=\sqrt{(l_B+1)^2-m^2}\left(p_{l_B+1}^m(\mu)g_{l_B}^m(\nu)
-p_{l_B}^m(\nu)g_{l_B+1}^m(\nu)\right).
\label{prop:C}
\ee
Using (\ref{prop:A}), (\ref{prop:B}), and (\ref{prop:C}), we obtain
\ba
\fl
p_{l_B+1}^m(\nu)\Lambda^m(\nu)
=\sqrt{(l_B+1)^2-m^2}\left[p_{l_B+1}^m(\nu)q_{l_B}^m(\nu)g_{l_B+1}^m(\nu)
-p_{l_B+1}^m(\nu)q_{l_B+1}^m(\nu)g_{l_B}^m(\nu)\right]
\\
=g_{l_B+1}^m(\nu)+\sqrt{(l_B+1)^2-m^2}\left[p_{l_B}^m(\nu)g_{l_B+1}^m(\nu)
-p_{l_B+1}^m(\nu)g_{l_B}^m(\nu)\right]q_{l_B+1}^m(\nu)
\\
=g_{l_B+1}^m(\nu)-\varpi\nu g^m(\nu,\nu)q_{l_B+1}^m(\nu).
\ea
We note that $\lim_{l\to\infty}q_l^m(w)/p_l^m(w)
=\lim_{l\to\infty}Q_l^m(w)/P_l^m(w)=0$ ($w\notin[-1,1]$), where 
$Q_l^m$ is the associated Legendre polynomial of the second kind.  Therefore 
we obtain
\[
\Lambda^m(\nu)=\lim_{l_B\to\infty}\frac{g_{l_B+1}^m(\nu)}{p_{l_B+1}^m(\nu)}.
\]
Thus the proof is completed.
\end{proof}

Let us recall that the recurrence relation
(\ref{chandra1}) for $g_l^m(\nu)$ is derived for an eigenvalue $\nu$ in 
(\ref{separated1d}) and rewrite (\ref{chandra1}) as
\[
\sqrt{\frac{l^2-m^2}{h_lh_{l-1}}}\sqrt{h_{l-1}}g_{l-1}^m(\nu)
+\sqrt{\frac{(l+1)^2-m^2}{h_lh_{l+1}}}\sqrt{h_{l+1}}g_{l+1}^m(\nu)
=\nu\sqrt{h_l}g_l^m(\nu).
\]
Hence eigenvalues of $B(m)$ are zeros of $g_{l_B+1}^m$.  Together with 
Proposition \ref{reLambda}, we see that discrete eigenvalues $\nu_j^m$ can be 
computed as eigenvalues of $B(m)$ for sufficiently large $l_B$.  More 
sophisticated ways of obtaining discrete eigenvalues are discussed in 
Ref.~\cite{Garcia-Siewert89}.  

The tridiagonal matrix $B(m)$ can be alternatively obtained as follows. 
Let us write $\Phi_{\nu}^m(\uv)$ as
\[
\Phi_{\nu}^m(\uv)
=\sum_{l'=0}^{\infty}\sum_{m'=-l'}^{l'}c_{l'm'}^m(\nu)Y_{l'm'}(\uv),
\]
where $c_{l'm'}^m(\nu)\in\Cm$.  Then (\ref{Phieq1}) can be rewritten as
\be
\left(1-\frac{\mu}{\nu}\right)\sum_{l'm'}c_{l'm'}Y_{l'm'}(\uv)
=\varpi\sum_{l'm'}\frac{\beta_{l'}}{2l'+1}c_{l'm'}Y_{l'm'}(\uv).
\label{rePhieq}
\ee
Thus for $|m|\le L$ we have
\[
c_{lm}-\frac{1}{\nu}\sum_{l'm'}c_{l'm'}\int_{\Sm^2}\mu Y_{l'm'}(\uv)
Y_{lm}^*(\uv)\,d\uv
=\frac{\varpi\beta_l}{2l+1}\chi_{[0,L]}(l)c_{lm}.
\]
Using the orthogonality relation for associated Legendre polynomials: 
$\int_{-1}^1P_l^m(\mu)P_{l'}^m\,d\mu=\delta_{ll'}2(l+m)!/[(2l+1)(l-m)!]$, 
we obtain
\ba
\fl
\sqrt{\frac{(2l+1)(2l'+1)}{h_lh_{l'}}}\left(
\sqrt{\frac{l^2-m^2}{4l^2-1}}\delta_{l',l-1}+
\sqrt{\frac{(l+1)^2-m^2}{4(l+1)^2-1}}\delta_{l',l+1}\right)
c_{l'm}\sqrt{\frac{h_{l'}}{2l'+1}}
\\
=\left(b_l(m)\delta_{l',l-1}+b_{l'}(m)\delta_{l',l+1}\right)
c_{l'm}\sqrt{\frac{h_{l'}}{2l'+1}}
=\nu c_{lm}\sqrt{\frac{h_l}{2l+1}}.
\ea
The above equation forms an eigenvalue problem for $B(m)$, and 
$c_{lm}$ are given in terms of eigenvectors of $B(m)$.  

\subsection{Singular eigenfunctions for three dimensions}
\label{pre:case3d}

\begin{defn}[Rotated reference frames]
\label{rrf}
Let $\uvk\in\Cm^3$ be a unit vector such that $\uvk\cdot\uvk=1$.   
We define an invertible linear operator 
$\rrf{\uvk}:\Cm\mapsto\Cm$.  For a function $f_1(\uv)\in\Cm$ ($\uv\in\Sm^2$), 
$\rrf{\uvk}f_1(\uv)$ is the value of $f_1(\uv)$ where $\uv$ is measured in 
the rotated reference frame whose $z$-axis lies in the direction of $\uvk$.
\end{defn}

Suppose that $f_1(\uv)\in\Cm$ is given by spherical harmonics:
\[
f_1(\uv)=\sum_{l=0}^{\infty}\sum_{m=-l}^lf_{lm}Y_{lm}(\uv),
\]
where $f_{lm}\in\Cm$.  Then we have \cite{Dede64,Kobayashi77,Markel04}
\ba
\rrf{\uvk}f_1(\uv)
&=&\sum_{l=0}^{\infty}\sum_{m=-l}^lf_{lm}\sum_{m'=-l}^l
D_{m'm}^l(\va_{\uvk},\theta_{\uvk},0)Y_{lm'}(\uv)
\\
&=&\sum_{l=0}^{\infty}\sum_{m=-l}^lf_{lm}\sum_{m'=-l}^l
e^{-im'\va_{\uvk}}d_{m'm}^l(\theta_{\uvk})Y_{lm'}(\uv),
\ea
where $\theta_{\uvk}$ and $\va_{\uvk}$ are the polar and azimuthal angles 
of $\uvk$ in the laboratory frame.  Here $D_{m'm}^l$ and $d_{m'm}^l$ are 
Wigner's $D$-matrices and $d$-matrices.  Moreover we obtain
\ba
\irrf{\uvk}f_1(\uv)
&=&\sum_{l=0}^{\infty}\sum_{m=-l}^lf_{lm}\sum_{m'=-l}^l
D_{m'm}^l(0,-\theta_{\uvk},-\va_{\uvk})Y_{lm'}(\uv)
\\
&=&\sum_{l=0}^{\infty}\sum_{m=-l}^lf_{lm}\sum_{m'=-l}^l
e^{im\va_{\uvk}}d_{mm'}^l(\theta_{\uvk})Y_{lm'}(\uv).
\ea
We can directly show $\irrf{\uvk}\rrf{\uvk}f_1(\uv)=f_1(\uv)$ by using 
$\sum_{m'=-l}^ld_{m'm}^l(\theta_{\uvk})d_{m'm''}^l(\theta_{\uvk})
=\delta_{mm''}$.
We have for $f_1(\uv),f_2(\uv)\in\Cm$,
\[
\fl
\rrf{\uvk}f_1(\uv)f_2(\uv)
=\left(\rrf{\uvk}f_1(\uv)\right)\left(\rrf{\uvk}f_1(\uv)\right),\qquad
\int_{\Sm^2}\rrf{\uvk}f_1(\uv)\,d\uv=\int_{\Sm^2}f_1(\uv)\,d\uv.
\]

\begin{exa}
For any function $f_1(\uv)$ and the unit vector $\hvv{z}$ in the positive 
direction on the $z$-axis, we have $\rrf{\hvv{z}}f_1(\uv)=f_1(\uv)$.
\end{exa}

\begin{exa}
$\rrf{\uvk}\uv\cdot\uv'=\uv\cdot\uv'$ for $\uv,\uv'\in\Sm^2$.
\end{exa}

\begin{exa}
$\rrf{\uvk}\mu=\sqrt{\frac{4\pi}{3}}\rrf{\uvk}Y_{10}(\uv)
=\sum_{m'=-1}^1e^{-im'\va_{\uvk}}d_{m'0}^1(\theta_{\uvk})Y_{1m'}(\uv)
=\uv\cdot\uvk$.
\end{exa}

\begin{defn}[Plane wave decomposition]
\label{planewave}
Complex unit vectors $\uvk(\nu,\vv{q})\in\Cm^3$ ($\nu\in\Rm$, 
$\vv{q}\in\Rm^2$) are given by
\[
\uvk(\nu,\vv{q})=\left(\begin{array}{c}i\nu\vv{q}\\\hat{k}_z(\nu q)
\end{array}\right),
\]
where $q=|\vv{q}|$ and
\[
\hat{k}_z(\nu q)=\sqrt{1+(\nu q)^2}.
\]
\end{defn}

\begin{exa}
For $\nu\in\Rm$, $\vv{q}\in\Rm^2$,  we obtain
\bea
\fl
\rrf{\uvk(-\nu,\vv{q})}\mu
=\hat{k}_z(\nu q)\mu-i\nu q\sqrt{1-\mu^2}\cos(\va-\va_{\vv{q}}),
\label{rotu}
\\
\fl
\irrf{\uvk(-\nu,\vv{q})}\mu
=\sqrt{\frac{4\pi}{3}}\irrf{\uvk(-\nu,\vv{q})}Y_{10}(\uv)
=\hat{k}_z(\nu q)\mu-i|\nu q|\sqrt{1-\mu^2}\cos\va.
\label{irotu}
\eea
\end{exa}

\begin{defn}[Refs.~\cite{Machida10,Panasyuk06}]
We define
\[
\cos[i\tau(\nu q)]=\cos\theta_{\uvk(\nu,\vv{q})},\qquad
\sin[i\tau(|\nu q|)]=\sin\theta_{\uvk(\nu,\vv{q})}.
\]
Since Wigner's $d$-matrices $d_{mm'}^l(\theta)$ are given in terms of 
$\cos\theta$, we also write
\[
d_{mm'}^l[i\tau(\nu q)]=d_{mm'}^l(\theta_{\uvk(\nu,\vv{q})}).
\]
\end{defn}

To compute Wigner's $d$-matrices, we take square roots such that 
$0\le\mathop{\rm arg}(\sqrt{z})<\pi$ for all $z\in\Cm$ 
\cite{Panasyuk06,Machida10}.  We have
\[
\fl
\cos\va_{\uvk(\nu,\vv{q})}
=\frac{\hvv{x}\cdot(i\nu\vv{q})}{\sqrt{(i\nu\vv{q})\cdot(i\nu\vv{q})}}
=\frac{\nu}{|\nu|}\cos\va_{\vv{q}},
\qquad
\sin\va_{\uvk(\nu,\vv{q})}
=\frac{\hvv{y}\cdot(i\nu\vv{q})}{\sqrt{(i\nu\vv{q})\cdot(i\nu\vv{q})}}
=\frac{\nu}{|\nu|}\sin\va_{\vv{q}},
\]
and
\[
\fl
\cos\theta_{\uvk(\nu,\vv{q})}
=\hvv{z}\cdot\uvk=\hat{k}_z(\nu q),
\qquad
\sin\theta_{\uvk(\nu,\vv{q})}
=\sqrt{1-\cos^2\theta_{\uvk(\nu,\vv{q})}}=\sqrt{(i\nu\vv{q})\cdot(i\nu\vv{q})}
=i|\nu q|.
\]
In particular we obtain
\[
\va_{\uvk(\nu,\vv{q})}=\cases{
\va_{\vv{q}},&for $\nu>0$,
\\
\va_{\vv{q}}+\pi,&for $\nu<0$,
}
\]
where $\va_{\vv{q}}$ is the polar angle of $\vv{q}$.

Let us consider
\be
\uv\cdot\nabla I(\vv{r},\uv)+I(\vv{r},\uv)
=\varpi\int_{\Sm^2}p(\uv,\uv')I(\vv{r},\uv')\,d\uv',
\label{rte3d}
\ee
where $\vv{r}\in\Rm^3$, $\uv\in\Sm^2$.  Solutions to the above 
equation are given by a superposition of eigenmodes
\be
I(\vv{r},\uv)=\rrf{\uvk}\Phi_{\nu}^m(\uv)e^{-\uvk\cdot\vv{r}/\nu},
\label{separated3d}
\ee
where $\uvk=\uvk(\nu,\vv{q})$.  To see this we substitute the separated 
solution (\ref{separated3d}) in the above homogeneous three-dimensional 
radiative transport equation (\ref{rte3d}) and obtain
\be
\left(1-\frac{\rrf{\uvk}\mu}{\nu}\right)\rrf{\uvk}\Phi_{\nu}^m(\uv)
=\varpi\int_{\Sm^2}p(\uv,\uv')\rrf{\uvk}\Phi_{\nu}^m(\uv')\,d\uv'.
\label{Phieq3D1}
\ee
The right-hand side can be written as
\be
\varpi\int_{\Sm^2}p(\uv,\uv')\rrf{\uvk}\Phi_{\nu}^m(\uv')\,d\uv'
=\varpi\int_{\Sm^2}p(\rrf{\uvk}\uv,\rrf{\uvk}\uv')
\rrf{\uvk}\Phi_{\nu}^m(\uv')\,d\uv'.
\label{Phieq3D2}
\ee
That is,
\[
\rrf{\uvk}\left(1-\frac{\mu}{\nu}\right)\Phi_{\nu}^m(\uv)
=\rrf{\uvk}\varpi\int_{\Sm^2}p(\uv,\uv')\Phi_{\nu}^m(\uv')\,d\uv'.
\]
Thus the three-dimensional equation (\ref{Phieq3D1}) reduces to the 
one-dimensional equation (\ref{Phieq1}).  Recall that $\Phi_{\nu}^m(\uv)$ 
given in (\ref{Phidef}) is constructed so that (\ref{Phieq1}) obtained from 
(\ref{rte1d}) and (\ref{separated1d}) is satisfied. We have
\be
\rrf{\uvk}\phi^m(\nu,\mu)
=\frac{\varpi\nu}{2}\mathcal{P}\frac{g^m(\nu,\uv\cdot\uvk)}{\nu-\uv\cdot\uvk}
+\lambda^m(\nu)\left(1-\nu^2\right)^{-|m|}\delta(\nu-\uv\cdot\uvk).
\label{rotatedphi}
\ee

\begin{prop}
\label{singularfunc}
The following orthogonality relation holds.
\[
\int_{\Sm^2}\mu\left(\rrf{\uvk(\nu,\vv{q})}\Phi_{\nu}^m(\uv)\right)
\left(\rrf{\uvk(\nu',\vv{q})}\Phi_{\nu'}^{m'*}(\uv)\right)\,d\uv
=2\pi\hat{k}_z(\nu q)\mathcal{N}(\nu)\delta_{\nu\nu'}\delta_{mm'}.
\]
\end{prop}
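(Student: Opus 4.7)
The proof splits into two pieces corresponding to the two Kronecker deltas. I would obtain $\delta_{\nu\nu'}$ by a Case-style subtraction of rotated eigenvalue equations, and the diagonal value (including $\delta_{mm'}$ and the constant $2\pi\hat k_z(\nu q)\mathcal N^m(\nu)$) by absorbing the two rotations into the integration and reducing to the one-dimensional orthogonality of \S\ref{case1d}.

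For $\delta_{\nu\nu'}$, set $A=\rrf{\uvk(\nu,\vv q)}\Phi_{\nu}^m$ and $A'=\rrf{\uvk(\nu',\vv q)}\Phi_{\nu'}^{m'*}$. Applying $\rrf{\uvk(\nu,\vv q)}$ to (\ref{Phieq1}) and using the rotation invariance of the scattering kernel (cf.~(\ref{Phieq3D2})) yields $(1-\uv\cdot\uvk(\nu,\vv q)/\nu)A(\uv)=\varpi\int p(\uv,\uv')A(\uv')\,d\uv'$, and the analogous equation for $A'$. Multiplying the first by $A'$, the second by $A$, integrating over $\Sm^2$, and subtracting eliminates the collision integrals via $p(\uv,\uv')=p(\uv',\uv)$, leaving $\int[\uv\cdot\uvk(\nu',\vv q)/\nu'-\uv\cdot\uvk(\nu,\vv q)/\nu]AA'\,d\uv=0$. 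The decomposition $\uv\cdot\uvk(\nu,\vv q)=\mu\hat k_z(\nu q)+i\nu q\sqrt{1-\mu^2}\cos(\va-\va_{\vv q})$ makes the $\vv q$-dependent piece $\nu$-independent after the division, so the bracket collapses to $\mu[\hat k_z(\nu' q)/\nu'-\hat k_z(\nu q)/\nu]$; hence $\int\mu AA'\,d\uv$ vanishes whenever $\hat k_z(\nu q)/\nu\neq\hat k_z(\nu' q)/\nu'$.

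For the diagonal case $\nu=\nu'$, the product rule $\rrf{\uvk}(f_1 f_2)=(\rrf{\uvk}f_1)(\rrf{\uvk}f_2)$ together with $\int\rrf{\uvk}h\,d\uv=\int h\,d\uv$ lets me peel off both rotations at the cost of replacing $\mu$ by $\irrf{\uvk}\mu$:
\[
\int_{\Sm^2}\mu AA'\,d\uv
=\int_{\Sm^2}\bigl(\irrf{\uvk(\nu,\vv q)}\mu\bigr)\Phi_{\nu}^m(\uv)\Phi_{\nu}^{m'*}(\uv)\,d\uv.
\]
Substituting (\ref{irotu}) and the factorization $\Phi_{\nu}^m\Phi_{\nu}^{m'*}=\phi^m(\nu,\mu)\phi^{m'}(\nu,\mu)(1-\mu^2)^{(|m|+|m'|)/2}e^{i(m-m')\va}$, the $\va$-integral of the leading $\hat k_z(\nu q)\mu$ piece produces $2\pi\hat k_z(\nu q)\delta_{mm'}$, and the remaining $\mu$-integral equals $\mathcal N^m(\nu)$ by the one-dimensional orthogonality together with (\ref{normalizationN}). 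This assembles into the claimed right-hand side.

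The genuinely nontrivial step is controlling the subleading $-i|\nu q|\sqrt{1-\mu^2}\cos\va$ piece of $\irrf{\uvk(\nu,\vv q)}\mu$ in the diagonal case: its azimuthal integral survives when $m'=m\pm1$, so one must verify that the residual $\mu$-integral $\int_{-1}^1\phi^m(\nu,\mu)\phi^{m\pm 1}(\nu,\mu)(1-\mu^2)^{(|m|+|m\pm 1|+1)/2}\,d\mu$ vanishes to secure $\delta_{mm'}$. I expect this to come either from exploiting the coupled recurrences (\ref{chandra1}) and (\ref{precurr}), which link consecutive $m$-indices in a manner orthogonal under the appropriate weight, or, equivalently, from the $\Delta m=\pm 1$ selection rule for Wigner's $d$-matrices after expanding $\Phi_{\nu}^m$ in spherical harmonics. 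Everything else in the argument is a mechanical reduction to results already established in \S\ref{pre:poly} and \S\ref{case1d}.
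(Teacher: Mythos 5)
Your proposal follows essentially the same route as the paper's proof: the Case-style subtraction, using that $\uv\cdot\uvk(\nu,\vv{q})/\nu=\hat{k}_z(\nu q)\mu/\nu+iq\sqrt{1-\mu^2}\cos(\va-\va_{\vv{q}})$ has a $\nu$-independent transverse part, gives (\ref{ortho1}), and peeling off the rotations via $\int_{\Sm^2}\mu(\rrf{\uvk}f_1)(\rrf{\uvk}f_2)\,d\uv=\int_{\Sm^2}(\irrf{\uvk}\mu)f_1f_2\,d\uv$ together with (\ref{irotu}) gives the normalization $2\pi\hat{k}_z(\nu q)\mathcal{N}^m(\nu)$. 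The one step you explicitly leave open --- the $-i|\nu q|\sqrt{1-\mu^2}\cos\va$ piece of $\irrf{\uvk}\mu$, whose azimuthal integral survives precisely when $m'=m\pm1$ --- is in fact the step the paper's own argument elides: in (\ref{ortho2}) the weight is silently taken to be $\mu$ rather than $\irrf{\uvk}\mu$, so only the $e^{i(m_1+m_2)\va}$ selection rule appears and the $\delta_{m_1+m_2,\pm1}$ contribution is never examined. Closing it requires showing that $\int_{-1}^1\phi^{m}(\nu,\mu)\phi^{m-1}(\nu,\mu)(1-\mu^2)^{m}\,d\mu=0$, which is delicate on the continuous spectrum (products of principal values and delta functions at the same point), or else falling back on the Green's-function derivation of the full-range orthogonality in \cite{Machida14}, which the paper cites as the original source; so your honest flagging of that term is a point in your favor rather than a defect relative to the printed proof.
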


\begin{proof}
The full-range orthogonality is obtained in \cite{Machida14} 
through the Green's function.  Here we give a direct proof.

We perform separation of variables to the homogeneous equation by assuming 
the form (\ref{separated3d}).  By substituting the separated solution into 
the radiative transport equation (\ref{rte3d}), we obtain
\[
\fl
\left(1-\frac{\rrf{\uvk}\mu}{\nu}\right)\rrf{\uvk}\Phi_{\nu}^m(\uv)
=\varpi\sum_{l=0}^L\sum_{m=-l}^l\frac{\beta_l}{2l+1}Y_{lm}(\uv)
\int_{\Sm^2}Y_{lm}^*(\uv')\rrf{\uvk}\Phi_{\nu}^m(\uv')\,d\uv',
\]
For fixed $\vv{q}$, we consider $(m_1,\nu_1^{m_1})$ and $(m_2,\nu_2^{m_2})$.  
Let us write $\uvk_1=\uvk(\nu_1^{m_1},\vv{q})$, 
$\uvk_2=\uvk(\nu_2^{m_2},\vv{q})$.  We write the following two equations.
\ba
\fl
\left(\rrf{\uvk_2}\Phi_{\nu_2}^{m_2}(\uv)\right)\rrf{\uvk_1}
\left(1-\frac{\mu}{\nu_1}\right)\Phi_{\nu_1}^{m_1}(\uv)
\\
=\varpi\sum_{l=0}^L\sum_{m=-l}^l\frac{\beta_l}{2l+1}
Y_{lm}(\uv)\left(\rrf{\uvk_2}\Phi_{\nu_2}^{m_2}(\uv)\right)
\int_{\Sm^2}Y_{lm}^*(\uv')\left(\rrf{\uvk_1}\Phi_{\nu_1}^{m_1}(\uv')
\right)\,d\uv',
\\
\fl
\left(\rrf{\uvk_1}\Phi_{\nu_1}^{m_1}(\uv)\right)\rrf{\uvk_2}
\left(1-\frac{\mu}{\nu_2}\right)\Phi_{\nu_2}^{m_2}(\uv)
\\
=\varpi\sum_{l=0}^L\sum_{m=-l}^l\frac{\beta_l}{2l+1}
Y_{lm}^*(\uv)\left(\rrf{\uvk_1}\Phi_{\nu_1}^{m_1}(\uv)\right)
\int_{\Sm^2}Y_{lm}(\uv')\left(\rrf{\uvk_2}\Phi_{\nu_2}^{m_2}(\uv')
\right)\,d\uv'.
\ea
We note (\ref{rotu}).  By subtraction and integration over $\uv$ we have
\ba
\int_{\Sm^2}
\left(\frac{\rrf{\uvk_2}\mu}{\nu_2}-\frac{\rrf{\uvk_1}\mu}{\nu_1}\right)
\left(\rrf{\uvk_1}\Phi_{\nu_1}^{m_1}(\uv)\right)
\left(\rrf{\uvk_2}\Phi_{\nu_2}^{m_2}(\uv)\right)\,d\uv
\\
=
\left(\frac{\hat{k}_z(\nu_2q)}{\nu_2}-\frac{\hat{k}_z(\nu_1q)}{\nu_1}\right)
\int_{\Sm^2}\mu\left(\rrf{\uvk_1}\Phi_{\nu_1}^{m_1}(\uv)\right)
\left(\rrf{\uvk_2}\Phi_{\nu_2}^{m_2}(\uv)\right)\,d\uv
\\
=0.
\ea
Therefore,
\be
\int_{\Sm^2}\mu\left(\rrf{\uvk_1}\Phi_{\nu_1}^{m_1}(\uv)\right)
\left(\rrf{\uvk_2}\Phi_{\nu_2}^{m_2}(\uv)\right)\,d\uv=0
\quad\mbox{for}\;\nu_1\neq\nu_2.
\label{ortho1}
\ee
Suppose $\nu=\nu_1=\nu_2$, $\uvk=\uvk_1=\uvk_2$, $m_1\neq m_2$.  
In this case we have
\bea
\fl
\int_{\Sm^2}\mu\left(\rrf{\uvk}\Phi_{\nu}^{m_1}(\uv)\right)
\left(\rrf{\uvk}\Phi_{\nu}^{m_2}(\uv)\right)\,d\uv
=\int_{\Sm^2}\mu\Phi_{\nu}^{m_1}(\uv)\Phi_{\nu}^{m_2}(\uv)\,d\uv
\nonumber \\
=\int_0^{2\pi}e^{i(m_1+m_2)\va}\,d\va\;
\int_{-1}^1\mu\phi^{m_1}(\nu,\mu)\phi^{m_2}(\nu,\mu)
\left(1-\mu^2\right)^{(|m_1|+|m_2|)/2}\,d\mu
\nonumber \\
\propto
\delta_{m_1,-m_2}.
\label{ortho2}
\eea
We note that
\[
\Phi_{\nu}^{-m}(\uv)=\Phi_{\nu}^m(\uv)^*.
\]
Using (\ref{ortho1}) and (\ref{ortho2}), for arbitrary $\nu,\nu',m,m'$, we have
\[
\int_{\Sm^2}\mu\left(\rrf{\uvk}\Phi_{\nu}^m(\uv)\right)
\left(\rrf{\uvk'}\Phi_{\nu'}^{m'}(\uv)^*\right)\,d\uv
\propto\delta_{\nu\nu'}\delta_{mm'}.
\]

If $\nu=\nu'$, $m=m'$, we have
\ba
\int_{\Sm^2}\mu\left(\rrf{\uvk}\Phi_{\nu}^m(\uv)\right)
\left(\rrf{\uvk}\Phi_{\nu}^m(\uv)^*\right)\,d\uv
&=&
\int_{\Sm^2}\left(\irrf{\uvk}\mu\right)\Phi_{\nu}^m(\uv)
\Phi_{\nu}^m(\uv)^*\,d\uv
\\
&=&
\int_{\Sm^2}\left(\irrf{\uvk}\mu\right)\left[\phi^m(\nu,\mu)\right]^2
\left(1-\mu^2\right)^{|m|}\,d\uv.
\ea
Hence,
\ba
\int_{\Sm^2}\mu\left(\rrf{\uvk}\Phi_{\nu}^m(\uv)\right)
\left(\rrf{\uvk}\Phi_{\nu}^m(\uv)^*\right)\,d\uv
&=&
2\pi\hat{k}_z(\nu q)\int_{-1}^1\mu\left[\phi^m(\nu,\mu)\right]^2
\left(1-\mu^2\right)^{|m|}\,d\mu
\\
&=&
2\pi\hat{k}_z(\nu q)\mathcal{N}^m(\nu),
\ea
where the normalization factor $\mathcal{N}^m(\nu)$ is given in 
(\ref{normalizationN}).  Thus we obtain the full-range orthogonality relation.
\end{proof}

\subsection{Method of rotated reference frames}
\label{pre:mrrf}

The method of rotated reference frames does not rely on singular 
eigenfunctions $\Phi_{\nu}^{m'}(\uv)$ and uses the expansion 
(\ref{mrrfexpansion}), in which $c_{lm}^{m'}(\nu)$ are unknown coefficients 
that can be fully numerically computed as eigenvectors of $B(m')$. The method 
is summarized 
in \ref{mrrf}. We describe below how the matrix $B(m')$ appears in 
this method.

We plug (\ref{mrrfexpansion}) into (\ref{Phieq3D1}):
\[
\fl
\left(1-\frac{\rrf{\uvk}\mu}{\nu}\right)
\sum_{lm}c_{lm}^{m'}\rrf{\uvk}Y_{lm}(\uv)
=\varpi\int_{\Sm^2}p(\rrf{\uvk}\uv,\rrf{\uvk}\uv')
\sum_{lm}c_{lm}^{m'}\rrf{\uvk}Y_{lm}(\uv')\,d\uv'.
\]
By operating $\irrf{\uvk}$, the above equation reduces to (\ref{rePhieq}), 
from which the matrix $B(m')$ is derived.

\section{The $F_N$ method in three dimensions}
\label{fn}

To show how the $F_N$ method can be extended to three dimensions, 
we will consider the half-space geometry in which a homogeneous random 
medium with optical parameter $\varpi$ exists only in the lower half $z<0$. 
By the Placzek lemma \cite{Placzek} we can consider the following 
radiative transport equation in $\Rm^3$ instead of (\ref{rte}).
\[
\fl
\cases{
\uv\cdot\nabla\psi(\vv{r},\uv)+\psi(\vv{r},\uv)
=\varpi\int_{\Sm^2}p(\uv,\uv')\psi(\vv{r},\uv')\,d\uv'
+\chi_{(0,\infty)}(z)S(\vv{r},\uv)+\mu I(\vv{r},\uv)\delta(z),
&$z\in(-\infty,\infty)$,
\\
\psi(\vv{r},\uv)\to0,
&$|z|\to\infty$,
}
\]
where $\chi_{(0,\infty)}(z)=1$ for $z>0$ and $=0$ otherwise.  
We have the jump condition
\[
\psi(\vv{\rho},0^+,\uv)-\psi(\vv{\rho},0^-,\uv)=I(\vv{\rho},0,\uv).
\]
Since $I(\vv{\rho},0,\uv)$ is given by only eigenmodes with positive 
eigenvalues and $\psi(\vv{\rho},0^-,\uv)$ is given by only eigenmodes 
with negative eigenvalues, we see that $\psi(\vv{\rho},0^-,\uv)=0$.  Therefore 
we obtain the relation
\[
\psi(\vv{r},\uv)=\cases{
I(\vv{r},\uv),&$z>0$,
\\
0,&$z<0$.
}
\]
Let us introduce the Green's function $G(\vv{r},\uv;\vv{r}_0,\uv_0)$ for 
the infinite medium as
\[
\fl
\cases{
\uv\cdot\nabla G(\vv{r},\uv)+G(\vv{r},\uv)
=\varpi\int_{\Sm^2}p(\uv,\uv')G(\vv{r},\uv')\,d\uv'
+\delta(\vv{r}-\vv{r}_0)\delta(\uv-\uv_0),
&$z\in(-\infty,\infty)$,
\\
G(\vv{r},\uv)\to0,
&$|z|\to\infty$.
}
\]
Thus we obtain
\ba
\psi(\vv{r},\uv)
&=&
\int_{\Sm^2}\int_{\Rm^2}G(\vv{r},\uv;\vv{\rho}',0,\uv')
\mu'I(\vv{\rho}',0,\uv')\,d\vv{\rho}'d\uv'
\\
&+&
\int_{\Sm^2}\int_0^{\infty}\int_{\Rm^2}
G(\vv{r},\uv;\vv{\rho}',z',\uv')S(\vv{\rho}',z',\uv')\,d\vv{\rho}'dz'd\uv',
\quad\vv{r}\in\Rm^3,\;\uv\in\Sm^2.
\ea
The Green's function is obtained as \cite{Machida14}
\[
G(\vv{r},\uv;\vv{r}_0,\uv_0)
=
\frac{1}{(2\pi)^2}\int_{\Rm^2}e^{-i\vv{q}\cdot(\vv{\rho}-\vv{\rho}_0)}
\tilde{G}(\vv{q};z,\uv;z_0,\uv_0)\,d\vv{q}.
\]
Here,
\ba
\fl
\tilde{G}(\vv{q};z,\uv;z_0,\uv_0)
=\sum_{m=-L}^L\Biggl\{\sum_{j=0}^{M^m-1}
\frac{1}{2\pi\hat{k}_z(\nu_j^mq)\mathcal{N}(\nu_j^m)}
\rrf{\uvk(\pm\nu_j^m,\vv{q})}\Phi_{j\pm}^m(\uv)\Phi_{j\pm}^{m*}(\uv_0)
e^{-\hat{k}_z(\nu_j^mq)|z-z_0|/\nu_j^m}
\\
+
\int_0^1\frac{1}{2\pi\hat{k}_z(\nu q)\mathcal{N}(\nu)}\rrf{\uvk(\pm\nu,\vv{q})}
\Phi_{\pm\nu}^m(\uv)\Phi_{\pm\nu}^{m*}(\uv_0)e^{-\hat{k}_z(\nu q)|z-z_0|/\nu}
\,d\nu\Biggr\},
\ea
where upper signs are chosen for $z>z_0$ and lower signs are chosen for 
$z<z_0$.  By letting $z\to0^+$ we obtain
\ba
I(\vv{\rho},0,\uv)
&=&
\int_{\Sm^2}\int_{\Rm^2}G(\vv{\rho},0^+,\uv;\vv{\rho}',0,\uv')
\mu'I(\vv{\rho}',0,\uv')\,d\vv{\rho}'d\uv'
\\
&+&
\int_{\Sm^2}\int_0^{\infty}\int_{\Rm^2}
G(\vv{\rho},0,\uv;\vv{\rho}',z',\uv')S(\vv{\rho}',z',\uv')\,
d\vv{\rho}'dz'd\uv',
\ea
where $\uv\in\Sm^2$.  We have
\bea
\tilde{I}(\vv{q},0,\uv)
&=&
\int_{\Sm^2}\tilde{G}(\vv{q};0^+,\uv;0,\uv')\mu'
\tilde{I}(\vv{q},0,\uv')\,d\uv'
\nonumber \\
&+&
\int_{\Sm^2}\int_0^{\infty}
\tilde{G}(\vv{q};0,\uv;z',\uv')\tilde{S}(\vv{q},z',\uv')\,dz'd\uv'.
\label{SBeq7}
\eea

\begin{defn}
Let $\xi^m$ denote the positive eigenvalues, i.e., 
$\xi^m=\nu_j^m$ ($j=0,1,\dots,M^m-1$) or $\xi^m=\nu\in(0,1)$.  We drop the 
superscript and write $\xi=\xi^m$ if there is no confusion.
\end{defn}

If we multiply (\ref{SBeq7}) by 
$\mu\rrf{\uvk(-\xi,\vv{q})}\Phi_{-\xi}^{m'*}(\uv)$ with some $m'$ 
and $\xi=\xi^{m'}>0$, and integrate over $\Sm^2$, we obtain
\ba
\fl
\int_{\Sm^2}\mu\left(\rrf{\uvk(-\xi,\vv{q})}\Phi_{-\xi}^{m'*}(\uv)
\right)\tilde{I}(\vv{q},0,\uv)\,d\uv
\\
=\int_{\Sm^2}\int_0^{\infty}\left(\rrf{\uvk(-\xi,\vv{q})}
\Phi_{-\xi}^{m'*}(\uv')\right)e^{-\hat{k}_z(\xi q)z'/\xi}
\tilde{S}(\vv{q},z',\uv')\,dz'd\uv'.
\ea
Hence we can write the above equation as
\bea
\fl
\int_{\Sm^2_+}\mu\left(\rrf{\uvk(-\xi,\vv{q})}\Phi_{-\xi}^{m'*}(-\uv)
\right)\tilde{I}(\vv{q},0,-\uv)\,d\uv
=\int_{\Sm^2_+}\mu\left(\rrf{\uvk(-\xi,\vv{q})}\Phi_{-\xi}^{m'*}(\uv)
\right)\tilde{f}(\vv{q},\uv)\,d\uv
\nonumber \\
-\int_{\Sm^2}\int_0^{\infty}\left(\rrf{\uvk(-\xi,\vv{q})}
\Phi_{-\xi}^{m'*}(\uv')\right)e^{-\hat{k}_z(\xi q)z'/\xi}
\tilde{S}(\vv{q},z',\uv')\,dz'd\uv'.
\label{projection1}
\eea
By the expansion in (\ref{FNexpansion}), we obtain the following 
key $F_N$ equation
\be
\sum_{m=-l_{\rm max}}^{l_{\rm max}}\sum_{l=|m|,|m|+2,\dots}
A_{lm}^{m'}(\xi,\vv{q})c_{lm}(\vv{q})
=K^{m'}(\xi,\vv{q}),
\label{keyFN}
\ee
where $-L\le m'\le L$.  Here,
\ba
A_{lm}^{m'}(\xi,\vv{q})
&=&
\int_{\Sm^2_+}\mu Y_{lm}(\uv)
\rrf{\uvk(-\xi,\vv{q})}\Phi_{-\xi}^{m'*}(-\uv)\,d\uv,
\\
K^{m'}(\xi,\vv{q})
&=&
\int_{\Sm^2_+}\mu\tilde{f}(\vv{q},\uv)\rrf{\uvk(-\xi,\vv{q})}
\Phi_{-\xi}^{m'*}(\uv)\,d\uv
\\
&-&
\int_{\Sm^2}\int_0^{\infty}
e^{-\hat{k}_z(\xi q)z'/\xi}\tilde{S}(\vv{q},z',\uv')
\rrf{\uvk(-\xi,\vv{q})}\Phi_{-\xi}^{m'*}(\uv')\,dz'd\uv'.
\ea

\begin{rmk}
In the above proof we used the Green's function in the free space to derive 
(\ref{projection1}). This approach is similar to the $C_N$ method 
\cite{Benoist-Kavenoky68,Kavenoky78}. If the Green's function for the half 
space is used, we can explicitly give $\tilde{I}(\vv{q},0,-\uv)$ without 
relying on (\ref{FNexpansion}) and (\ref{keyFN}) \cite{Siewert-Benoist79}. 
However, the half-space Green's function in three dimensions is not yet known.
\end{rmk}

We obtain
\[
A_{lm}^{m'}(\xi,\vv{q})=A_{lm}^{m'}(\xi,q)e^{im\va_{\vv{q}}},
\]
where
\bea
\fl
A_{lm}^{m'}(\xi,q)
=(-1)^m\hat{k}_z(\xi q)\sqrt{\frac{\pi}{2l+1}}d_{mm'}^l[i\tau(\xi q)]
\left(\sqrt{(l+1)^2-{m'}^2}g_{l+1}^{m'}(\xi)
+\sqrt{l^2-{m'}^2}g_{l-1}^{m'}(\xi)\right)
\nonumber \\
\fl
-i\frac{|\xi q|}{2}\sqrt{\frac{\pi}{2l+1}}(-1)^m
\sum_{m''=-l}^ld_{mm''}^l[i\tau(\xi q)]
\nonumber \\
\fl
\times\Biggl[\delta_{m'',m'-1}\left(\sqrt{(l-m'')(l-m')}g_{l-1}^{m'}(\xi)
-\sqrt{(l+m'+1)(l+m')}g_{l+1}^{m'}(\xi)\right)
\nonumber \\
\fl
+\delta_{m'',m'+1}\left(\sqrt{(l-m'+1)(l-m')}g_{l+1}^{m'}(\xi)
-\sqrt{(l+m'')(l+m')}g_{l-1}^{m'}(\xi)\right)\Biggr]
\nonumber \\
\fl
+\frac{\varpi\xi}{2}(-1)^l\sqrt{\frac{2l+1}{4\pi}\frac{(l-m)!}{(l+m)!}}
[\mathop{\rm sgn}(m')]^{m'}\frac{\sqrt{(2|m'|)!}}{(2|m'|-1)!!}
\sum_{m''=-|m'|}^{|m'|}(-1)^{m''}\sqrt{\frac{(|m'|-m'')!}{(|m'|+m'')!}}
\nonumber \\
\fl
\times d_{m'',-m'}^{|m'|}[i\tau(\xi q)]
\int_{\Sm^2_+}
\frac{g^{m'}\left(-\xi,\hat{k}_z(\xi q)\mu-i\xi q\sqrt{1-\mu^2}\cos\va\right)}
{\xi+\hat{k}_z(\xi q)\mu-i\xi q\sqrt{1-\mu^2}\cos\va}
\mu P_{|m'|}^{m''}(\mu)P_l^m(\mu)e^{i(m+m'')\va}\,d\uv.
\nonumber \\
\label{Aq}
\eea
If $K^{m'}(\xi,\vv{q})$ is independent of $\va_{\vv{q}}$ and 
$K^{-m'}=K^{m'}$, then
\[
c_{lm}(\vv{q})=c_{lm}(q)e^{-im\va_{\vv{q}}},\qquad
c_{l,-m}(q)=(-1)^mc_{lm}(q).
\]
Here the coefficients $c_{lm}(q)$ are solutions to
\bea
\fl
\sum_{m=0}^{l_{\rm max}}
\sum_{\alpha=0}^{\left\lfloor(l_{\rm max}-m)/2\right\rfloor}
\left[A_{m+2\alpha,m}^{m'}(\xi,q)+(1-\delta_{m0})(-1)^m
A_{m+2\alpha,-m}^{m'}(\xi,q)\right]c_{m+2\alpha,m}(q)
\nonumber \\
=K^{m'}(\xi,\vv{q}),
\label{keyFN2}
\eea
where $A_{m+2\alpha,m}^{m'}(\xi,q)$ are given in (\ref{Aq}).

The rest of the section is devoted to the calculations of (\ref{Aq}) and 
(\ref{keyFN2}).

First, $A_{lm}^{m'}(\xi,\vv{q})$ are computed as follow.  
We begin by noting that
\bea
\fl
A_{lm}^{m'}(\xi,\vv{q})
=\int_{\Sm^2}\mu Y_{lm}(\uv)\rrf{\uvk(-\xi,\vv{q})}
\Phi_{-\xi}^{m'*}(-\uv)\,d\uv
-\int_{\Sm^2_-}\mu Y_{lm}(\uv)\rrf{\uvk(-\xi,\vv{q})}
\Phi_{-\xi}^{m'*}(-\uv)\,d\uv
\nonumber \\
=\int_{\Sm^2}\left(\irrf{\vv{\uvk}(-\xi,\vv{q})}\mu Y_{lm}(\uv)
\right)\Phi_{-\xi}^{m'*}(-\uv)\,d\uv
+\int_{\Sm^2_+}\mu Y_{lm}(-\uv)\rrf{\uvk(-\xi,\vv{q})}
\Phi_{-\xi}^{m'*}(\uv)\,d\uv.
\nonumber \\
\label{Asplit}
\eea
We obtain the first term on the right-hand side of (\ref{Asplit}) as
\ba
\fl
\mbox{[1st term]}
=\int_{\Sm^2}\Phi_{-\xi}^{m'*}(-\uv)\irrf{\uvk(-\xi,\vv{q})}
\mu Y_{lm}(\uv)\,d\uv
\\
=\sum_{m''=-l}^le^{im\va_{\uvk}}d_{mm''}^l(\theta_{\uvk})
\int_{\Sm^2}\Phi_{-\xi}^{m'*}(-\uv)
\left(\irrf{\uvk(-\xi,\vv{q})}\mu\right)Y_{lm''}(\uv)\,d\uv
\\
=(-1)^l\sum_{m''=-l}^le^{im\va_{\uvk}}d_{mm''}^l(\theta_{\uvk})
\int_{\Sm^2}\Phi_{-\xi}^{m'*}(\uv)\left(-\hat{k}_z(\xi q)\mu+
i|\xi q|\sqrt{1-\mu^2}\cos\va\right)Y_{lm''}(\uv)\,d\uv.
\ea
Here,
\ba
\fl
\int_{\Sm^2}\Phi_{-\xi}^{m'*}(\uv)\mu Y_{lm''}(\uv)\,d\uv
\\
\fl
=\sqrt{\frac{(l+1)^2-{m''}^2}{4(l+1)^2-1}}
\int_{\Sm^2}\Phi_{-\xi}^{m'*}(\uv)Y_{l+1,m''}(\uv)\,d\uv
+\sqrt{\frac{l^2-{m''}^2}{4l^2-1}}
\int_{\Sm^2}\Phi_{-\xi}^{m'*}(\uv)Y_{l-1,m''}(\uv)\,d\uv
\\
\fl
=\delta_{m'm''}(-1)^{m'}\sqrt{\frac{\pi}{2l+1}}\left(
\sqrt{(l+1)^2-{m'}^2}g_{l+1}^{m'}(-\xi)+\sqrt{l^2-{m'}^2}g_{l-1}^{m'}(-\xi)
\right),
\ea
where we used $\mu P_l^{m'}(\mu)=
[(l+m')P_{l-1}^{m'}(\mu)+(l-m'+1)P_{l+1}^{m'}]/(2l+1)$.  We also have
\ba
\fl
\int_{\Sm^2}\Phi_{-\xi}^{m'*}(\uv)
\sqrt{1-\mu^2}\cos\va Y_{lm''}(\uv)\,d\uv
=\sqrt{\frac{(2l+1)\pi}{4}\frac{(l-m'')!}{(l+m'')!}}
\left(\delta_{m'',m'-1}+\delta_{m'',m'+1}\right)
\\
\times
\int_{-1}^1\phi^{m'}(-\xi,\mu)(1-\mu^2)^{(|m'|+1)/2}P_l^{m''}(\mu)\,d\mu.
\ea
Using $\sqrt{1-\mu^2}P_l^{m'-1}(\mu)=
[P_{l-1}^{m'}(\mu)-P_{l+1}^{m'}(\mu)]/(2l+1)$, 
$\sqrt{1-\mu^2}P_l^{m'+1}(\mu)=
(l-m')\mu P_l^{m'}(\mu)-(l+m')P_{l-1}^{m'}(\mu)$, we obtain
\ba
\fl
\int_{\Sm^2}\Phi_{-\xi}^{m'*}(\uv)
\sqrt{1-\mu^2}\cos\va Y_{lm''}(\uv)\,d\uv
\\
\fl
=\frac{1}{2}\sqrt{\frac{\pi}{2l+1}}(-1)^{l+1}\Biggl[
\delta_{m'',m'-1}\left(\sqrt{(l-m'+1)(l-m')}g_{l-1}^{m'}(\xi)
-\sqrt{(l+m'+1)(l+m')}g_{l+1}^{m'}(\xi)\right)
\\
\fl
+\delta_{m'',m'+1}\left(\sqrt{(l-m'+1)(l-m')}g_{l+1}^{m'}(\xi)
-\sqrt{(l+m'+1)(l+m')}g_{l-1}^{m'}(\xi)\right)\Biggr].
\ea
Therefore,
\ba
\fl
\mbox{[1st term]}
\\
\fl
=-(-1)^{l+m'}\hat{k}_z(\xi q)\sqrt{\frac{\pi}{2l+1}}
e^{im\va_{\uvk}}d_{mm'}^l(\theta_{\uvk})\left(
\sqrt{(l+1)^2-{m'}^2}g_{l+1}^{m'}(-\xi)+\sqrt{l^2-{m'}^2}g_{l-1}^{m'}(-\xi)
\right)
\\
\fl
-i\frac{|\xi q|}{2}\sqrt{\frac{\pi}{2l+1}}
\sum_{m''=-l}^le^{im\va_{\uvk}}d_{mm''}^l(\theta_{\uvk})
\\
\fl
\times\Biggl[\delta_{m'',m'-1}\left(
\sqrt{(l-m'+1)(l-m')}g_{l-1}^{m'}(\xi)-\sqrt{(l+m'+1)(l+m')}g_{l+1}^{m'}(\xi)
\right)
\\
\fl
+\delta_{m'',m'+1}\left(\sqrt{(l-m'+1)(l-m')}g_{l+1}^{m'}(\xi)
-\sqrt{(l+m'+1)(l+m')}g_{l-1}^{m'}(\xi)\right)\Biggr].
\ea
We will use
\[
\fl
(1-\mu^2)^{|m'|/2}e^{-im'\va}
=\frac{(-1)^{|m'|}}{(2|m'|-1)!!}P_{|m'|}^{|m'|}(\mu)e^{-im'\va}
=[\mathop{\rm sgn}(m')]^{m'}\frac{\sqrt{4\pi(2|m'|+1)!}}{(2|m'|+1)!!}
Y_{|m'|,-m'}(\uv).
\]
The second term on the right-hand side of (\ref{Asplit}) is calculated as
\ba
\fl
\mbox{[2nd term]}
=\int_{\Sm^2_+}\left(\rrf{\uvk(-\xi,\vv{q})}\Phi_{-\xi}^{m'*}(\uv)
\right)\mu Y_{lm}(-\uv)\,d\uv
\\
=\frac{\varpi\xi}{2}\int_{\Sm^2_+}
\frac{g^{m'}\left(-\xi,\hat{k}_z(\xi q)\mu-i\xi q\sqrt{1-\mu^2}
\cos(\va-\va_{\vv{q}})\right)}{\xi+\hat{k}_z(\xi q)\mu
-i\xi q\sqrt{1-\mu^2}\cos(\va-\va_{\vv{q}})}
[\mathop{\rm sgn}(m')]^{m'}\frac{\sqrt{4\pi(2|m'|+1)!}}{(2|m'|+1)!!}
\\
\times
\sum_{m''=-|m'|}^{|m'|}e^{-im''\va_{\uvk}}d_{m'',-m'}^{|m'|}(\theta_{\uvk})
\mu Y_{|m'|m''}(\uv)Y_{lm}(-\uv)\,d\uv
\\
\fl
=\frac{\varpi\xi}{2}(-1)^l\sqrt{\frac{2l+1}{4\pi}\frac{(l-m)!}{(l+m)!}}
[\mathop{\rm sgn}(m')]^{m'}\frac{\sqrt{(2|m'|)!}}{(2|m'|-1)!!}
\sum_{m''=-|m'|}^{|m'|}\sqrt{\frac{(|m'|-m'')!}{(|m'|+m'')!}}
e^{-im''\va_{\uvk}}d_{m'',-m'}^{|m'|}(\theta_{\uvk})
\\
\times\int_{\Sm^2_+}\left(\rrf{\uvk(-\xi,\vv{q})}
\frac{g^{m'}(-\xi,\mu)}{\xi+\mu}\right)
\mu P_{|m'|}^{m''}(\mu)P_l^m(\mu)e^{i(m+m'')\va}\,d\uv.
\ea
We note that the relation $g^m(-\xi,\mu)=g^m(\xi,-\mu)$ implies 
$\rrf{\uvk}\phi^m(-\xi,\mu)=\rrf{\uvk}\phi^m(\xi,-\mu)$ 
for a fixed $\uvk$.  Thus (\ref{Aq}) is obtained.

Next, (\ref{keyFN2}) is obtained as follows.  
Using $p_l^{-m}(\mu)=(-1)^mp_l^m(\mu)$, $g_l^{-m}(\xi)=(-1)^mg_l^m(\xi)$, 
and $g^m(\xi,\mu)=g^{-m}(\xi,\mu)$, we can show that
\[
A_{l,-m}^{-m'}(\xi,\vv{q})e^{im\va_{\vv{q}}}
=(-1)^mA_{lm}^{m'}(\xi,\vv{q})e^{-im\va_{\vv{q}}}.
\]
Since we assume $K^{-m'}(\xi,\vv{q})=K^{m'}(\xi,\vv{q})$, we have
\ba
\fl
\sum_{m=-l_{\rm max}}^{l_{\rm max}}\sum_lA_{lm}^{m'}(\xi,\vv{q})c_{lm}(\vv{q})
=\sum_{m=-l_{\rm max}}^{l_{\rm max}}\sum_lA_{lm}^{-m'}(\xi,\vv{q})
c_{lm}(\vv{q})
\\
=\sum_{m=-l_{\rm max}}^{l_{\rm max}}\sum_lA_{l,-m}^{-m'}(\xi,\vv{q})
c_{l,-m}(\vv{q})
=\sum_{m=-l_{\rm max}}^{l_{\rm max}}\sum_l
A_{lm}^{m'}(\xi,\vv{q})(-1)^me^{-2im\va_{\vv{q}}}c_{l,-m}(\vv{q}).
\ea
This implies
\[
c_{l,-m}(\vv{q})=(-1)^me^{2im\va_{\vv{q}}}c_{lm}(\vv{q}).
\]
Moreover since we assume that $K^{m'}(\xi,\vv{q})$ is independent of 
$\va_{\vv{q}}$, we have
\[
\sum_{m=-l_{\rm max}}^{l_{\rm max}}\sum_lA_{lm}^{m'}(\xi,\vv{q})c_{lm}(\vv{q})
=\sum_{m=-l_{\rm max}}^{l_{\rm max}}\sum_l
A_{lm}^{m'}(\xi,q)e^{im\va_{\vv{q}}}c_{lm}(\vv{q}).
\]
This implies
\[
c_{lm}(\vv{q})=c_{lm}(q)e^{-im\va_{\vv{q}}}.
\]
Therefore we obtain
\[
c_{l,-m}(q)=(-1)^mc_{lm}(q).
\]
By using this relation in (\ref{keyFN}), we obtain (\ref{keyFN2}).

\section{Structured illumination}
\label{numerics}

Let us consider a structured illumination in the half space:
\[
\fl
\cases{
\uv\cdot\nabla I(\vv{r},\uv)+I(\vv{r},\uv)
=\varpi\int_{\Sm^2}p(\uv,\uv')I(\vv{r},\uv')\,d\uv',
&$z>0$,
\\
I(\vv{r},\uv)=f(\vv{\rho},\uv),
&$z=0$, $\mu\in(0,1]$,
\\
I(\vv{r},\uv)\to0,
&$z\to\infty$.
}
\]
Here the incoming boundary value $f$ is given by
\[
f(\vv{\rho},\uv)
=I_0\left[1+A_0\cos(\vv{q}_0\cdot\vv{\rho}+B_0)\right]\delta(\uv-\uv_0),\qquad
\uv_0\in\Sm^2_+,
\]
where $I_0$ is the amplitude, $A_0$ is the modulation depth, and $B_0$ is 
the phase of the source.  It is enough if we consider \cite{Lukic09}
\be
f(\vv{\rho},\uv)
=e^{-i\vv{q}_0\cdot\vv{\rho}}\delta(\uv-\uv_0),\quad\uv_0\in\Sm^2_+,
\label{expiqrho}
\ee
where $\uv_0$ has the azimuthal angle $\va_0$ and the cosine of the polar 
angle $\mu_0$.  By collision expansion we can write $I$ as
\[
I(\vv{r},\uv)=I_b(\vv{r},\uv)+I_s(\vv{r},\uv),
\]
where $I_b$ is the ballistic term and $I_s$ is the scattered part.  
They satisfy
\[
\cases{
\uv\cdot\nabla I_b(\vv{r},\uv)+I_b(\vv{r},\uv)=0,&$z>0$,
\\
I_b(\vv{r},\uv)=f(\vv{\rho},\uv),&$z=0$, $\mu\in(0,1]$,
}
\]
and
\[
\fl
\cases{
\uv\cdot\nabla I_s(\vv{r},\uv)+I_s(\vv{r},\uv)
=\varpi\int_{\Sm^2}p(\uv,\uv')I_s(\vv{r},\uv')\,d\uv'
+S(\vv{r},\uv),&$z>0$,
\\
I_s(\vv{r},\uv)=0,&$z=0$, $\mu\in(0,1]$,
}
\]
where
\[
S(\vv{r},\uv)
=\varpi\int_{\Sm^2}p(\uv,\uv')I_b(\vv{r},\uv')\,d\uv'.
\]
We also assume $I_b,I_s\to0$ as $z\to\infty$.  Let us put
\[
\uv_0=\hvv{z}.
\]
We obtain
\[
I_b(\vv{r},\uv)=e^{-i\vv{q}_0\cdot\vv{\rho}}e^{-z}\delta(\uv-\hvv{z}).
\]
We have
\[
\fl
S(\vv{r},\uv)=\frac{\varpi}{4\pi}e^{-i\vv{q}_0\cdot\vv{\rho}}e^{-z}
\sum_{l=0}^L\beta_lP_l(\mu),\qquad
\tilde{S}(\vv{q},z,\uv)=\pi\varpi\delta(\vv{q}-\vv{q}_0)e^{-z}
\sum_{l=0}^L\beta_lP_l(\mu).
\]
Furthermore we assume that $\vv{q}_0$ is parallel to the $x$-axis:
\be
\vv{q}_0=q_0\hvv{x}.
\label{vectorq0}
\ee
We obtain
\ba
K^{m'}(\xi,\vv{q})
&=&
-\int_{\Sm^2}\int_0^{\infty}
e^{-\hat{k}_z(\xi q)z'/\xi}\tilde{S}(\vv{q},z',\uv')
\rrf{\uvk(-\xi,\vv{q})}\Phi_{-\xi}^{m'*}(\uv')\,dz'd\uv'
\\
&=&
\frac{-2\pi^{3/2}\varpi\xi}{\xi+\hat{k}_z(\xi q)}
\delta(\vv{q}-\vv{q}_0)
\sum_{l=0}^L\frac{\beta_l}{\sqrt{2l+1}}
\int_{\Sm^2}Y_{l0}(\uv')
\rrf{\uvk(-\xi,\vv{q})}\Phi_{-\xi}^{m'*}(\uv')\,d\uv'.
\ea
We note that
\ba
\int_{\Sm^2}Y_{l0}(\uv)
\rrf{\uvk(-\xi,\vv{q})}\Phi_{-\xi}^{m'*}(\uv)\,d\uv
&=&
\int_{\Sm^2}\left(\irrf{\vv{\uvk}(-\xi,\vv{q})}Y_{l0}(\uv)
\right)\Phi_{-\xi}^{m'*}(\uv)\,d\uv
\\
&=&
\sum_{m''=-l}^ld_{0m''}^l(\theta_{\uvk})
\int_{\Sm^2}\Phi_{-\xi}^{m'*}(\uv)Y_{lm''}(\uv)\,d\uv
\\
&=&
\sum_{m''=-l}^ld_{0m''}^l(\theta_{\uvk})
\sqrt{(2l+1)\pi}(-1)^{m'}\delta_{m'm''}g_l^{m'}(-\xi)
\\
&=&
\chi_{[0,l]}(|m'|)d_{0m'}^l(\theta_{\uvk})
\sqrt{(2l+1)\pi}(-1)^lg_l^{m'}(\xi).
\ea
Therefore,
\[
\fl
K^{m'}(\xi,\vv{q}_0)
=\check{K}^{m'}(\xi,\vv{q})\delta(\vv{q}-\vv{q}_0),\quad
\check{K}^{m'}(\xi,\vv{q}_0)
=\frac{-2\pi^2\varpi \xi}{\xi+\hat{k}_z(\xi q_0)}
\sum_{l=|m'|}^L(-1)^l\beta_ld_{0m'}^l(\theta_{\uvk})g_l^{m'}(\xi).
\]
This implies that $c_{lm}(\vv{q})$ have the form
\[
c_{lm}(\vv{q})=\check{c}_{lm}(\vv{q}_0)\delta(\vv{q}-\vv{q}_0).
\]

Since $\check{K}^{m'}(\xi,\vv{q}_0)$ is independent of $\va_{\vv{q}_0}$ and 
$\check{K}^{-m'}=\check{K}^{m'}$, we can write the key $F_N$ equation as
\bea
\fl
\sum_{m=0}^{l_{\rm max}}
\sum_{\alpha=0}^{\left\lfloor(l_{\rm max}-m)/2\right\rfloor}
\left[A_{m+2\alpha,m}^{m'}(\xi,q_0)+(1-\delta_{m0})(-1)^m
A_{m+2\alpha,-m}^{m'}(\xi,q_0)\right]
\check{c}_{m+2\alpha,m}(q_0)
\nonumber \\
=\check{K}^{m'}(\xi,\vv{q}_0).
\label{Btilde2}
\eea
The number of columns of the matrix $\{A(q_0)\}_{\xi^{m'},lm}
=A_{lm}^{m'}(\xi,q_0)$ is $N_{\rm tot}$, where
\[
N_{\rm tot}
=\sum_{m=0}^{l_{\rm max}}N_{\rm col}^m
=\cases{
\frac{(l_{\rm max}+2)^2}{4},&$l_{\rm max}$ even,
\\
\frac{(l_{\rm max}+1)(l_{\rm max}+3)}{4},&$l_{\rm max}$ odd,
}
\]
where $N_{\rm col}^m=\left\lfloor(l_{\rm max}-m)/2\right\rfloor+1$.  
We choose the number of rows so that $A(q_0)$ becomes square.  
For this purpose, 
different collocation schemes have been proposed \cite{Garcia-Siewert81,
Garcia-Siewert85,Garcia-Siewert98,McCormick-Sanchez81}.  Here we take, in 
addition to discrete eigenvalues $\xi_j=\nu_{j-1}^{m'}$ ($j=1,\dots,M^{m'}$), 
$N_{\rm col}^{m'}-M^{m'}$ points according to
\be
\xi_j=\cos\left(\frac{\pi}{2}\frac{j-M^{m'}}{N_{\rm col}^{m'}-M^{m'}+1}
\right),\quad
j=M^{m'}+1,\dots,N_{\rm col}^{m'}.
\label{conteigencos}
\ee
The number of components of the vector $\{\check{\vv{K}}(q_0)\}_{\xi^{m'}}
=\check{K}^{m'}(\xi,\vv{q}_0)$ is $N_{\rm tot}$.

The hemispheric flux $J_+(\vv{\rho};\vv{q}_0)$ exiting the boundary is
\ba
J_+(\vv{\rho};\vv{q}_0)
&=&
\int_0^{2\pi}\int_0^1\mu I(\vv{\rho},0,-\uv)\,d\mu d\va
\\
&\approx&
\frac{1}{4\pi^{3/2}}e^{-i\vv{q}_0\cdot\vv{\rho}}\sum_{l=0,2,\dots}
\sqrt{2l+1}\check{c}_{l0}(\vv{q}_0)\int_0^1\mu P_l(\mu)\,d\mu.
\ea
Here for even $l$
\[
\int_0^1\mu P_l(\mu)\,d\mu
=\frac{-(-1)^{l/2}(l-1)!!}{l!!(l-1)(l+2)}
=\frac{-(-1)^{l/2}l!}{2^l(l-1)(l+2)\left[\left(\frac{l}{2}\right)!\right]^2}.
\]
Therefore we obtain
\be
J_+(\vv{\rho};\vv{q}_0)
\approx
\frac{1}{4\pi^{3/2}}e^{-i\vv{q}_0\cdot\vv{\rho}}\sum_{l=0,2,\dots}
\frac{\sqrt{2l+1}(-1)^{1+l/2}l!}
{2^l(l-1)(l+2)\left[\left(\frac{l}{2}\right)!\right]^2}\check{c}_{l0}(q_0).
\label{hemisphericJ}
\ee
Let us express the absolute value as
\be
J_+(q_0)=\left|J_+(\vv{\rho};\vv{q}_0)\right|.
\label{hemisphericJabs}
\ee

The algorithm of the three-dimensional $F_N$ method can be summarized as 
follows.
\paragraph{Step 1.}
The integral over $\mu$ in (\ref{Aq}) is done using the Golub-Welsch 
algorithm \cite{Golub-Welsch} of the Gauss-Legendre quadrature with 
points $\mu_n$ and weights $w_n$ ($n=1,2,\dots,N_{\mu}$). The integral 
over $\va$ in (\ref{Aq}) is computed using the trapezoid rule with points 
$\va_j=2\pi j/N_{\va}$ ($j=0,1,\dots,N_{\va}$). We use eigenvalues of the 
matrix $B(m')$ in (\ref{Bmat}) for $\xi_j^{m'}$ corresponding to discrete 
eigenvalues and use (\ref{conteigencos}) for $\xi_j^{m'}$ corresponding 
to the continuous spectrum. We calculate $P_l^m(\mu_n)$ and 
$g_l^m(\xi_j^{m'})$ with recurrence relations. The polynomials $g_l^m(\xi)$ 
are evaluated 
according to \cite{Garcia-Siewert89,Garcia-Siewert90}.  That is, when $\xi$ 
is a discrete eigenvalue, we obtain $g_l^m(\xi)$ starting with a large degree 
using backward recursion.  For $\xi$ in the continuous spectrum, we begin 
with the initial term and successively obtain $g_l^m(\xi)$ using the 
three-term recurrence relation (\ref{chandra1}).
\paragraph{Step 2.}
The analytically continued Wigner $d$-matrices are computed using the 
recurrence relation. See \ref{Wignerpyramid}.
\paragraph{Step 3.}
We compute the double integrals in (\ref{Aq}). In the function $g^{m'}$, we 
compute $p_l^m(\mu)$ by using the recurrence relation (\ref{precurr}). 
The computation time for each double integral grows as $N_{\mu}N_{\va}$.
\paragraph{Step 4.}
The coefficients $\check{c}_{lm}(q_0)$ are obtained from the linear system 
(\ref{Btilde2}) with the $N_{\rm tot}\times N_{\rm tot}$ matrix $A(q_0)$ 
and the vector $\check{\vv{K}}(q_0)$ of length $N_{\rm tot}$.
\paragraph{Step 5.}
Once $\check{c}_{lm}(q_0)$ are obtained, $J_+(\vv{\rho};\vv{q}_0)$ is 
immediately calculated by using (\ref{hemisphericJ}).

\begin{rmk}
The computation time is dominated by the integral in (\ref{Aq}), which does 
not exist in the method of rotated reference frames (\ref{mrrf}). For a given 
$\vv{q}_0$, the computation time for the double integrals grows as 
$O(l_{\rm max}^5N_{\mu}N_{\phi})$ whereas the computation time of $J_+(q_0)$ 
scales as $O(l_{\rm max}^5)$ in the method of rotated reference frames.
\end{rmk}

For numerical calculation, let us set the absorption and scattering 
coefficients to
\[
\mu_a=0.05,\qquad\mu_s=100.
\]
We set the scattering asymmetry parameter to $\mathrm{g}=0.9$ and 
$\mathrm{g}=0.01$ (almost isotropic).  
Although the unit of length has been $1/\mu_t$, we take the transport mean 
free path $\ell^*=1/(\mu_t-\mu_s\mathrm{g})$ to be the unit of length in the 
figures. 

In Figs.~\ref{fig1} and \ref{fig2}, $J_+(q_0)$ in (\ref{hemisphericJabs}) is 
plotted as a function of the spatial frequency $q_0$. The $F_N$ result is 
compared with Monte Carlo simulation and 
the method of rotated reference frames. 
In Monte Carlo simulation $10^8$ particles were used.  To obtain Monte Carlo 
simulation for structured illumination, Fourier transform was 
performed to results from Monte Carlo simulation for the delta-function 
source \cite{LK12e}.  Monte Carlo simulation assumed the Henyey-Greenstein 
model for the scattering phase function.  The method of rotated 
reference frames for structured illumination \cite{LK12c,LK12e}
is summarized in \ref{mrrf}. 

The scattering asymmetry parameter 
$\mathrm{g}=0.01$ in Fig.~\ref{fig1} and $\mathrm{g}=0.9$ in Fig.~\ref{fig2}. 
We set $L=l_{\rm max}$. For both the $F_N$ method and the method of 
rotated reference frames we consider $l_{\rm max}=9$ and $25$.  
In Fig.~\ref{fig1}, the three methods agree reasonably well for 
$l_{\rm max}=9$. When we increase $l_{\rm max}$ aiming at more accuracy, 
however, $J_+$ from the method of rotated reference frames becomes unstable. 
Note that in this case scattering is almost isotropic and discrete 
eigenvalues are rather close to $1$. Hence we have 
$\nu-\mu\hat{k}_z(\nu q_0)<0$ (see (\ref{rotatedphi})) 
for relatively small $q_0$. 
In Fig.~\ref{fig2}, the result from the 3D $F_N$ method has a jump near 
$q_0=3.7$ for $l_{\rm max}=9$ because this $l_{\rm max}$ is not sufficiently 
large in this case. A smooth curve is obtained if large enough $l_{\rm max}$ 
is used as shown in the right panel of Fig.~\ref{fig2} for $l_{\rm max}=25$. 

\begin{figure}[ht]
\begin{center}
\includegraphics[width=0.45\textwidth]{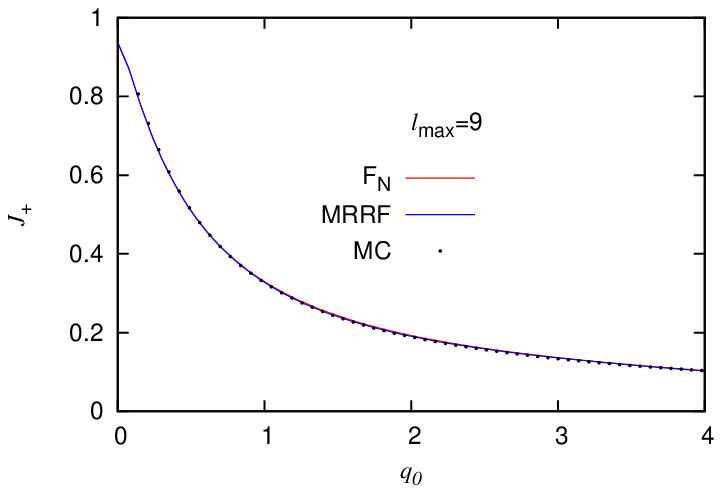}
\hspace{2mm}
\includegraphics[width=0.45\textwidth]{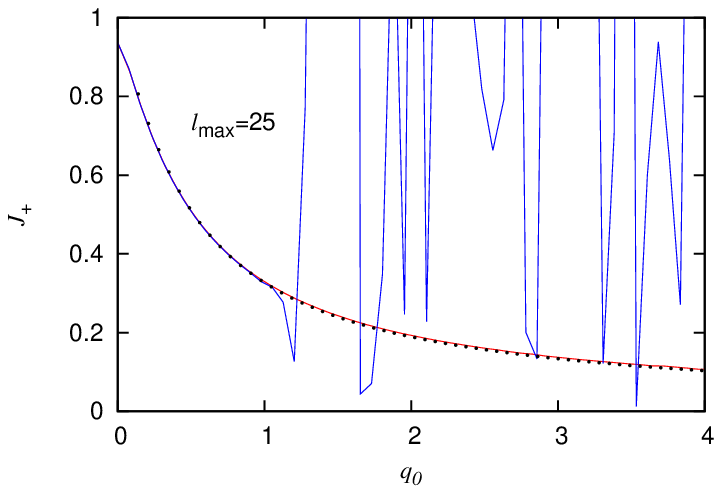}
\end{center}
\caption{
The exitance (\ref{hemisphericJabs}) is plotted against $q_0$ for 
$\mu_a=0.05$, $\mu_s=100$, and $\mathrm{g}=0.01$.  The unit of length is 
$\ell^*$.  For the $F_N$ method and the method of rotated reference frames 
(MRRF) we set (Left) $l_{\rm max}=9$ and (Right) $l_{\rm max}=25$.
}
\label{fig1}
\end{figure}

\begin{figure}[ht]
\begin{center}
\includegraphics[width=0.45\textwidth]{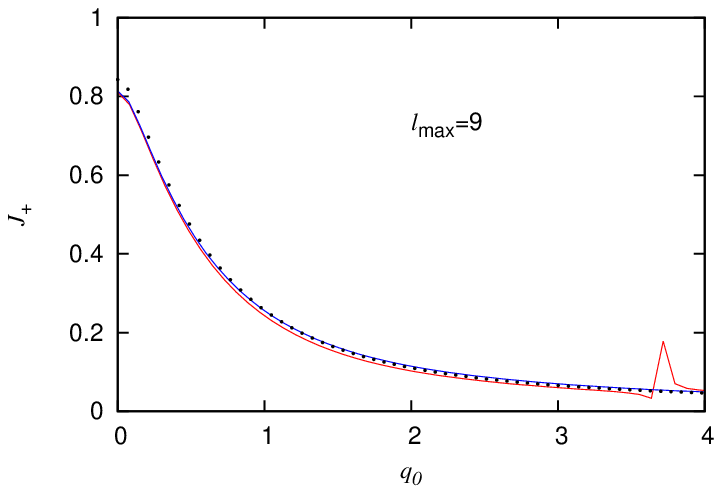}
\hspace{2mm}
\includegraphics[width=0.45\textwidth]{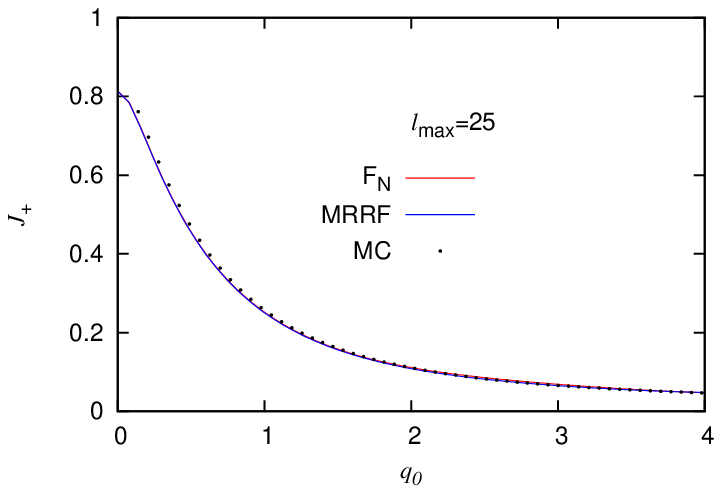}
\end{center}
\caption{
The exitance (\ref{hemisphericJabs}) is plotted against $q_0$ for 
$\mu_a=0.05$, $\mu_s=100$, and $\mathrm{g}=0.9$.  The unit of length is 
$\ell^*$.  For the $F_N$ method and the method of rotated reference frames 
(MRRF) we set (Left) $l_{\rm max}=9$ and (Right) $l_{\rm max}=25$.
}
\label{fig2}
\end{figure}

\section{Concluding remarks}
\label{concl}

The $F_N$ method is similar to the method of rotated reference frames in the 
sense that spherical-harmonic expansion is used.  However, 
in the $F_N$ method, there is no need of expanding singular eigenfunctions.  
The extension of the $F_N$ method in the half space to the slab geometry 
is straight forward. In the slab geometry, in addition to conditions such as 
(\ref{projection0}) for one plane at $z=0$, we have another set of conditions 
that corresponds to the other plane. Once the specific intensity on the 
boundary is obtained, it is also possible to compute the specific intensity 
inside the medium for the half space geometry and the slab geometry 
\cite{Siewert78}.

\ack
The author learned the $F_N$ method at the 23rd International Conference 
on Transport Theory (September 2013, Santa Fe, New Mexico).  He is grateful 
to C. E. Siewert and J. C. Schotland for fruitful discussion on collision 
expansion.  
Monte Carlo simulation was performed using the Monte Carlo solver MC 
developed by V. A. Markel.

\appendix

\section{Structured illumination with the method of rotated reference frames}
\label{mrrf}

In this section we solve (\ref{rte}) with the method of rotated reference 
frames \cite{LK12c,LK12e}.  We consider structured illumination and assume 
the source term (\ref{expiqrho}) with (\ref{vectorq0}).

We write the eigenvector of the matrix $B(M)$ in (\ref{Bmat}) corresponding 
to the eigenvalue $\nu$ as $|y_{\nu}\rangle$ ($\langle y_{\nu}|y_{\nu}\rangle
=1$).  Note that $\nu$ and $|y_{\nu}\rangle$ depend on $M$.  In the 
method of rotated reference frames, we write the specific intensity as 
a superposition of $I^{(+)}$ and $I^{(-)}$ \cite{Machida10,Panasyuk06}, where
\ba
\fl
I_M^{(+)}(\vv{r},\uv)
=e^{i\vv{q}\cdot\vv{\rho}-\hat{k}_z(\nu q)z/\nu}\sum_{l=0}^{l_{\rm max}}
\sqrt{\frac{2l+1}{h_l}}\sum_{m=-l}^l
Y_{lm}(\uv)(-1)^me^{-im\va_{\vv{q}}}\langle l|y_{\nu}\rangle
d_{mM}^l[i\tau(\nu q)],
\\
\fl
I_M^{(-)}(\vv{r},\uv)
=e^{i\vv{q}\cdot\vv{\rho}+\hat{k}_z(\nu q)z/\nu}\sum_{l=0}^{l_{\rm max}}
\sqrt{\frac{2l+1}{h_l}}\sum_{m=-l}^l
Y_{lm}(-\uv)e^{-im\va_{\vv{q}}}\langle l|y_{\nu}\rangle
d_{m,-M}^l[i\tau(\nu q)],
\ea
In the half space $\Rm^3_+$, the specific intensity is given by
\[
I(\vv{r},\uv)\approx\frac{1}{(2\pi)^2}\sum_{M=-L}^L
\sum_{\nu}\int_{\Rm^2}F_M^{(+)}I_M^{(+)}(\vv{r},\uv)\,d\vv{q},
\]
where $\sum_{\nu}$ stands for the sum over all positive eigenvalues of 
$B(M)$.  From the boundary conditions we obtain
\[
F_M^{(+)}=f_M^{(+)}(q)(2\pi)^2\delta(q_x+q_0)\delta(q_y),\qquad
f_{-M}^{(+)}(q)=(-1)^Mf_M^{(+)}(q).
\]
Here $f_M^{(+)}(q)$ are solutions to
\[
\mathcal{M}(q)f_M^{(+)}(q)=v^{(+)},\quad M\ge0,
\]
where
\[
\fl
\{\mathcal{M}(q)\}_{lm,\nu}
=\sum_{l'=0}^{l_{\rm max}}\sqrt{\frac{2l'+1}{h_{l'}}}\mathcal{B}_{ll'}^m
\langle l'|y_{\nu}\rangle\left(d_{mM}^{l'}[i\tau(\nu q)]+
(1-\delta_{M0})(-1)^Md_{m,-M}^{l'}[i\tau(\nu q)]\right),
\]
and
\[
\{v^{(+)}\}_{lm}=\delta_{m0}\sum_{l'=0}^{l_{\rm max}}
\mathcal{B}_{ll'}^0\sqrt{\frac{2l'+1}{4\pi}}.
\]
Here,
\[
\mathcal{B}_{ll'}^m
=\frac{1}{2}\sqrt{\frac{(2l+1)(2l'+1)(l-m)!(l'-m)!}{(l+m)!(l'+m)!}}
\int_0^1P_l^m(\mu)P_{l'}^m(\mu)\,d\mu.
\]
That is,
\[
\fl
I(\vv{r},\uv)
\approx\frac{1}{2\pi}\sum_{l=0}^{l_{\rm max}}\sum_{m=0}^li^m
\sqrt{\frac{2l+1}{h_l}}\left[Y_{lm}(\uv)+(1-\delta_{m0})Y_{lm}^*(\uv)\right]
K_{lm}(\vv{\rho},z),
\]
where
\ba
K_{lm}(\vv{\rho},z)
=2\pi(-i)^me^{i\vv{q}\cdot\vv{\rho}}\sum_{M\ge0}\sum_{\nu}
\langle l|y_{\nu}\rangle e^{-\hat{k}_z(\nu q_0)z/\nu}f_M^{(+)}(q_0)
\\
\times\left[d_{mM}^l[i\tau(q_0\nu)]
+(1-\delta_{M0})(-1)^Md_{m,-M}^l[i\tau(q_0\nu)]\right].
\ea
The hemispheric flux is obtained as
\bea
\fl
J_+(\vv{\rho})
=\int_0^{2\pi}\int_{-1}^0(\uv\cdot\hvv{z})I(\vv{\rho},0,\uv)\,d\mu d\va
\nonumber \\
=\int_0^{2\pi}\int_{-1}^1(\uv\cdot\hvv{z})I(\vv{\rho},0,\uv)\,d\mu d\va
-\int_0^{2\pi}\int_0^1(\uv\cdot\hvv{z})I(\vv{\rho},0,\uv)\,d\mu d\va
\nonumber \\
=\frac{1}{\sqrt{\pi h_1}}K_{10}(\vv{\rho},0)
-e^{-iq_0x}\mu_0\chi_{[0,1]}(\mu_0),
\label{j-mrrf}
\eea
where we used $I(\vv{\rho},0,\uv)=e^{-iq_0x}\delta(\uv-\uv_0)$ for 
$\mu>0$.  The expression (\ref{j-mrrf}) is used for Figs.~\ref{fig1} 
and \ref{fig2}.

\section{Analytically continued Wigner $d$-matrices}
\label{Wignerpyramid}

To compute the analytically continued Wigner $d$-matrices we use 
a pyramid scheme with recurrence relations \cite{Blanco97}. We begin with 
$d^0_{00} [i\tau(x)] \,(=1)$, $d^1_{00}[i\tau(x)]$, $d^1_{1-1}[i\tau(x)]$, 
$d^1_{10}[i\tau(x)]$, and $d^1_{11}[i\tau(x)]$:
\[
\fl
d^1_{00}=\sqrt{1+x^2},\quad
d^1_{1-1}=\frac{1-\sqrt{1+x^2}}{2},\quad
d^1_{10}=-i\frac{x}{\sqrt{2}},\quad
d^1_{11}=\frac{1+\sqrt{1+x^2}}{2}.
\]
Let us we increase $l$ iteratively up to $l_{\rm max}$.  For each value of 
$l$, we first compute $d^l_{mm'}[i\tau(x)]$ ($m=0,\dots,l-2;m'=-m,\dots,m$) 
according to
\ba
\fl
d^l_{mm'}=
\frac{l(2l-1)}{\sqrt{(l^2-m^2)(l^2-{m'}^2)}}
\\
\times
\left[\left(d^1_{00}-\frac{mm'}{l(l-1)}\right)d^{l-1}_{mm'}
-\frac{\sqrt{\left[(l-1)^2-m^2\right]\left[(l-1)^2-{m'}^2\right]}}{(l-1)(2l-1)}
d^{l-2}_{mm'}\right].
\ea
We obtain $d^l_{ll}[i\tau(x)]$ and $d^l_{l-1,l-1}[i\tau(x)]$ as
\[
d^l_{ll}=d^1_{11}d^{l-1}_{l-1,l-1},
\qquad
d^l_{l-1,l-1}=(ld^1_{00}-l+1)d^{l-1}_{l-1,l-1},
\]
and $d^l_{lm'}[\tau(x)]$ $(m'=l-1,\dots,-l)$ as
\[
d^l_{lm'}=-i\sqrt{\frac{l+m'+1}{l-m'}}
\sqrt{\left|\frac{d^1_{1-1}}{d^1_{11}}\right|}d^{l}_{l,m'+1}.
\]
With the relation
\[
d^l_{l-1,m'}=-i\frac{ld^1_{00}-m'}{ld^1_{00}-m'-1}
\sqrt{\frac{l+m'+1}{l-m'}}\sqrt{\left|\frac{d^1_{1-1}}{d^1_{11}}\right|}
d^{l}_{l-1,m'+1},
\]
we have $d^l_{l-1,m'}[i\tau(x)]$ ($m'=l-2,\dots,1-l$).  Other functions 
$d^l_{mm'}[i\tau(x)]$ are obtained by using the symmetry properties
\[
d^{l}_{mm'}=d^{l}_{-m',-m}=(-1)^{m+m'}d^{l}_{-m,-m'}=(-1)^{m+m'}d^{l}_{m'm}.
\]

\section*{References}

\end{document}